  \newtheorem{theorem}{Theorem}
 \newtheorem{definition}{Definition}
  \newtheorem{corollary}{Corollary}
    \newtheorem{lemma}{Lemma}    
    \newtheorem{proposition}{Proposition}
\DeclareMathOperator{\Div}{div}
\begin{document}

 \title{The Navier--Stokes equations in mixed-norm time-space parabolic Morrey spaces.}
\author{Pierre Gilles Lemari\'e-Rieusset\footnote{LaMME, Univ Evry, CNRS, Universit\'e Paris-Saclay, 91025, Evry, France; e-mail : pierregilles.lemarierieusset@univ-evry.fr}}
\date{}\maketitle

\begin{abstract} We discuss the Navier-Stokes equations with forces  in the mixed-norm time-space parabolic Morrey spaces of Krylov.
\end{abstract}

\noindent{\bf Keywords : }   Navier--Stokes equations, heat equation, parabolic Morrey spaces, mild solutions.\\

\noindent{\bf AMS classification : } 35K55, 35Q30, 76D05.\\
 
 \section{Introduction}
  In this paper, we consider global mild solutions of the Cauchy problem for the incompressible Navier--Stokes equations on the whole space $\mathbb{R}^3$.  More precisely, the Navier--Stokes equations we study are  
 \begin{equation}\label{NSE}\left\{
 \begin{split} &\partial_t \vec u =\Delta\vec u-\vec \nabla p-\vec u\cdot \vec \nabla\vec u + \vec f+\Div\mathbb{F}
\\& \Div\vec u=0
\\& \vec u(0,.)=  0
\end{split}\right.\end{equation}
where $\vec f$  and $\mathbb{F}$ are small enough in some critical  (homogeneous) mixed-norm time-space parabolic Morrey spaces of Krylov.

(Following Krylov \cite{KRY}), let us recall the definition of mixed-norm time-space parabolic Morrey spaces on $\mathbb{R}\times\mathbb{R}^d$:

\begin{definition}
  Let $1<p,q<+\infty$ and $\beta\in (0, \frac 2 p+\frac d q)$. The parabolic  Morrey spaces $E_{p,q,\beta}$ and $F_{p,q,\beta}$ are the spaces of locally integrable functions $f(t,x)$ on $\mathbb{R}\times\mathbb{R}^d$ such that $\|f\|_{E_{p,q,\beta}}<+\infty$ or $\|f\|_{F_{p,q,\beta}}<+\infty$, where
  $$ \|f\|_{E_{p,q,\beta}}=\sup_{\rho>0, t\in\mathbb{R}, x\in\mathbb{R}^d} \rho^{\beta-\frac 2 p-\frac d q} \left(\int_{\vert t-s\vert<\rho^2} (\int_{\vert x-y\vert<\rho} \vert f(s,y)\vert^q\, dy)^{\frac p q}\, ds\right)^{\frac 1 p}$$
  and   $$ \|f\|_{F_{p,q,\beta}}=\sup_{\rho>0, t\in\mathbb{R}, x\in\mathbb{R}^d} \rho^{\beta-\frac 2 p-\frac d q} \left(\int_{\vert x-y\vert<\rho} (\int_{\vert t-s\vert<\rho^2} \vert f(s,y)\vert^p\, ds)^{\frac  q p}\, dy\right)^{\frac 1 q}.$$
  \end{definition}
  Remark: when $p=q$, writing $r=\frac {d+2}\beta$ (so that $p<r<+\infty$), we see that $E_{p,p,\beta}=F_{p,p,\beta}=\mathcal{M}^{p,r}_2(\mathbb{R}\times\mathbb{R}^d)$, where $\mathcal{M}^{p,r}_2(\mathbb{R}\times\mathbb{R}^d)$ is the parabolic Morrey space studied in \cite{PGL2} in the context of Navier--Stokes equations.
  
  For a function defined on $(0,+\infty)\times\mathbb{R}^d$, we say that $f\in E_{p,q,\beta}$ or $f\in F_{p,q,\beta}$ if the function $f^\#$ defined by $ f^\#=f$ for $t>0$ and $f^\#=0$ for $t<0$ is such that  $ f^\#\in E_{p,q,\beta}$ or $ f^\#\in F_{p,q,\beta}$.
  
  Our main theorem is the following one:
  
  \begin{theorem}\label{theo1}
  Let $p,q\in (3,\infty)$ with $\frac 2 p+\frac 3 q>1$. \\ a) There exists $\epsilon_0=\epsilon_0(p,q)>0$ and $C_0=C_0(p,q)>0$ such that, if $\vec f\in E_{p/3,q/3,3}$ with $\Div\vec f=0$  and $\mathbb{F}\in E_{p/2,q/2,2}$ and if
  $$ \|\vec f\|_{E_{p/3,q/3,3}}+\| \mathbb{F}\|_{E_{p/2,q/2,2}} <\epsilon_0,$$ then the Navier--Stokes equations  (\ref{NSE}) have a global mild solution $\vec u\in E_{p,q,1}$ and $\vec\nabla\otimes\vec u\in E_{p/2,q/2,2}$ with 
$$ \|\vec u\|_{E_{p,q,1}}+\|\vec\nabla\otimes\vec u\|_{ E_{p/2,q/2,2}}\leq C_0 (\|\vec f\|_{E_{p/3,q/3,3}}+\| \mathbb{F}\|_{E_{p/2,q/2,2}}).$$
b)  There exists $\epsilon_0=\epsilon_0(p,q)>0$ and $C_0=C_0(p,q)>0$ such that, if $\vec f\in F_{p/3,q/3,3}$ with $\Div\vec f=0$  and $\mathbb{F}\in F_{p/2,q/2,2}$ and if
  $$ \|\vec f\|_{F_{p/3,q/3,3}}+\| \mathbb{F}\|_{F_{p/2,q/2,2}} <\epsilon_0,$$ then the Navier--Stokes equations   
(\ref{NSE}) have a global mild solution $\vec u\in F_{p,q,1}$ and $\vec\nabla\otimes\vec u\in F_{p/2,q/2,2}$ with 
$$ \|\vec u\|_{F_{p,q,1}}+\|\vec\nabla\otimes\vec u\|_{ F_{p/2,q/2,2}}\leq C_0 (\|\vec f\|_{F_{p/3,q/3,3}}+\| \mathbb{F}\|_{F_{p/2,q/2,2}}).$$
  \end{theorem}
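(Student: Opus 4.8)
\medskip

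\noindent\textbf{Proof proposal.} The plan is to recast (\ref{NSE}) as a fixed point problem and to solve it by the Picard contraction principle. Applying the Leray projection $\mathbb{P}$ (which removes $\vec\nabla p$ and, since $\Div\vec f=0$, fixes $\vec f$) and writing $\vec u\cdot\vec\nabla\vec u=\Div(\vec u\otimes\vec u)$, a mild solution of (\ref{NSE}) is a solution of $\vec u=\vec U_0+B(\vec u,\vec u)$ with
\begin{equation*}
\vec U_0=\int_0^t e^{(t-s)\Delta}\mathbb{P}\bigl(\vec f+\Div\mathbb{F}\bigr)(s)\,ds,\qquad B(\vec u,\vec v)=-\int_0^t e^{(t-s)\Delta}\mathbb{P}\,\Div(\vec u\otimes\vec v)(s)\,ds .
\end{equation*}
Here $e^{(t-s)\Delta}$ and $e^{(t-s)\Delta}\mathbb{P}$ are convolutions in $(s,y)$ with the heat kernel $W$ and the Oseen tensor $O$, both supported in $\{s<t\}$, and $|\vec\nabla^{k}W(t,x)|+|\vec\nabla^{k}O(t,x)|\lesssim(\sqrt t+|x|)^{-(d+2-\gamma)}$ with $\gamma=2-k$, $d=3$; since this is trivially true (the left side vanishing) for $s>t$, in every estimate the causal kernels may be dominated by the full parabolic Riesz potentials $(\sqrt{|t|}+|x|)^{-(d+2-\gamma)}$. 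I would work in the Banach space $X=\{\vec u:\ \|\vec u\|_X:=\|\vec u\|_{E_{p,q,1}}+\|\vec\nabla\otimes\vec u\|_{E_{p/2,q/2,2}}<\infty\}$, and its $F$-analogue for part~b); completeness is routine.

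The argument rests on two ingredients. The first is a Hölder-type product inequality: because $\tfrac1{p/2}=\tfrac2p$ and $\tfrac1{p/3}=\tfrac3p$ (and likewise for $q$), iterated Hölder and Cauchy--Schwarz in $s$ and in $y$ give
$$\|\vec u\otimes\vec v\|_{E_{p/2,q/2,2}}\le\|\vec u\|_{E_{p,q,1}}\|\vec v\|_{E_{p,q,1}},\qquad\|\vec u\otimes\vec v\|_{E_{p/3,q/3,3}}\le\|\vec u\|_{E_{p/2,q/2,2}}\|\vec v\|_{E_{p,q,1}},$$
and the same with $F$ in place of $E$; the exponents of $\rho$ in the Morrey norms match because $1-\tfrac2p-\tfrac3q$ doubles to $2-\tfrac4p-\tfrac6q$ and $(2-\tfrac4p-\tfrac6q)+(1-\tfrac2p-\tfrac3q)=3-\tfrac6p-\tfrac9q$. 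The second, and central, ingredient is a parabolic mixed-norm Adams--Hedberg inequality: convolution with $(\sqrt{|t|}+|x|)^{-(d+2-\gamma)}$ maps $E_{p_1,q_1,\beta_1}$ boundedly into $E_{p_2,q_2,\beta_2}$, and $F_{p_1,q_1,\beta_1}$ into $F_{p_2,q_2,\beta_2}$, whenever $\beta_2=\beta_1-\gamma>0$, $p_2=p_1\beta_1/\beta_2$, $q_2=q_1\beta_1/\beta_2$ and both spaces are admissible. I would prove this by Hedberg's device: splitting the convolution at parabolic scale $\delta$, bounding the near part by $\delta^{\gamma}$ times the parabolic Hardy--Littlewood maximal function $Mf$ and the far part by $\delta^{\gamma-\beta_1}\|f\|_{E_{p_1,q_1,\beta_1}}$ (via the Morrey condition at each dyadic scale), optimizing in $\delta$ to obtain $|\mathrm{conv}(f)|\lesssim(Mf)^{\beta_2/\beta_1}\,\|f\|_{E_{p_1,q_1,\beta_1}}^{\gamma/\beta_1}$, and concluding from the boundedness of $M$ on these spaces --- the exponents closing because $p_2\beta_2/\beta_1=p_1$ and $\gamma+\beta_2=\beta_1$. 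The mixed-norm maximal function bound and the covering and Hölder lemmas behind it are those of Krylov~\cite{KRY}; for $p=q$ this is the parabolic Morrey estimate of~\cite{PGL2}.

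Granting these, the fixed-point estimates follow by moving spatial derivatives between kernel and argument (legitimate, as $\vec\nabla$ commutes with $e^{(t-s)\Delta}$, $\mathbb{P}$ and $\Div$). For $\|\vec U_0\|_X$: the kernel $O$ ($\gamma=2$) sends $\vec f\in E_{p/3,q/3,3}$ to $E_{p,q,1}$; $\vec\nabla O$ ($\gamma=1$) sends $\vec f$ to $E_{p/2,q/2,2}$ (giving $\vec\nabla\otimes$ of that term) and sends $\mathbb{F}\in E_{p/2,q/2,2}$ to $E_{p,q,1}$; and the one genuinely singular term, $\vec\nabla\otimes\int_0^t e^{(t-s)\Delta}\mathbb{P}\Div\mathbb{F}\,ds=(\vec\nabla\otimes\vec\nabla\otimes O)\ast\mathbb{F}$, is controlled by the boundedness on $E_{p/2,q/2,2}$ (resp.\ $F_{p/2,q/2,2}$) of this parabolic Calderón--Zygmund operator, a standard fact. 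For the bilinear term one writes $B(\vec u,\vec v)=-\vec\nabla O\ast(\vec u\otimes\vec v)$ and $\vec\nabla\otimes B(\vec u,\vec v)=-\vec\nabla O\ast\bigl((\vec\nabla\otimes\vec u)\otimes\vec v+\vec u\otimes(\vec\nabla\otimes\vec v)\bigr)$, so that the product inequality together with Adams--Hedberg for $\vec\nabla O$ ($E_{p/2,q/2,2}\to E_{p,q,1}$, resp.\ $E_{p/3,q/3,3}\to E_{p/2,q/2,2}$) yields $\|B(\vec u,\vec v)\|_X\le C_B\|\vec u\|_X\|\vec v\|_X$, no double-derivative kernel being needed here. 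All index conditions --- that $p/3,q/3>1$ (so that $E_{p/3,q/3,3}$, hence $\vec f$ and $\vec\nabla\otimes(\vec u\otimes\vec v)$, make sense), that each of $E_{p,q,1},E_{p/2,q/2,2},E_{p/3,q/3,3}$ is admissible (all three reducing to $\tfrac2p+\tfrac3q>1$), and that $\gamma<\beta_1$ at each Adams--Hedberg step --- hold precisely because $p,q>3$ and $\tfrac2p+\tfrac3q>1$. The Picard lemma then gives $\epsilon_0(p,q)>0$ such that, when the data are $<\epsilon_0$, $\|\vec U_0\|_X<1/(4C_B)$, whence a solution $\vec u\in X$ with $\|\vec u\|_X\le2\|\vec U_0\|_X\le C_0(\|\vec f\|_{E_{p/3,q/3,3}}+\|\mathbb{F}\|_{E_{p/2,q/2,2}})$; part~b) is the same argument verbatim with $F$ in place of $E$.

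The hard part is the mixed-norm Adams--Hedberg inequality, together with the mixed-norm maximal function and Calderón--Zygmund bounds it relies on: in a genuine mixed norm the parabolic balls defining the Morrey condition do not respect the $L^p_tL^q_x$ (or $L^q_xL^p_t$) ordering, so the dyadic decompositions and the Hölder and interpolation steps must be carried out with Krylov's mixed-norm covering and maximal function machinery rather than with one-parameter Morrey arguments; everything else (the product inequalities, the commutation of derivatives, the contraction step) is routine.
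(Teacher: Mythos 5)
Your proposal is correct and follows essentially the same route as the paper: a Picard iteration in the space $X$ normed by $\|\vec u\|_{E_{p,q,1}}+\|\vec\nabla\otimes\vec u\|_{E_{p/2,q/2,2}}$, with the H\"older product estimate $\|\vec u\otimes\vec v\|_{E_{p/2,q/2,2}}\leq\|\vec u\|_{E_{p,q,1}}\|\vec v\|_{E_{p,q,1}}$, the parabolic Hedberg/Riesz-potential bounds for the subcritical kernel estimates, the mixed-norm maximal function, and the parabolic Calder\'on--Zygmund bound for the one singular term $(\vec\nabla\otimes\vec\nabla\otimes O)\ast\mathbb{F}$ --- exactly the ingredients of Theorems \ref{theo3}--\ref{theo4} and Propositions \ref{anisotrop}--\ref{regmaxkryl}. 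The only (harmless) deviation is your treatment of $\vec\nabla\otimes B(\vec u,\vec v)$ by Leibniz plus the $\mathcal{I}_1$ estimate $E_{p/3,q/3,3}\to E_{p/2,q/2,2}$, where the paper instead applies the Calder\'on--Zygmund bound directly to $\vec u\otimes\vec u\in E_{p/2,q/2,2}$; both close under the hypotheses $p,q>3$, $\frac2p+\frac3q>1$.
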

  
  When the force $\vec f$ is equal to $0$, we may lower the values of $p$ and $q$:
  \begin{theorem}\label{theo2}
  Let $p,q\in (2,\infty)$ with $\frac 2 p+\frac 3 q>1$. \\ a) There exists $\epsilon_0=\epsilon_0(p,q)>0$ and $C_0=C_0(p,q)>0$ such that, if   $\mathbb{F}\in E_{p/2,q/2,2}$ and 
  $$ \| \mathbb{F}\|_{E_{p/2,q/2,2}} <\epsilon_0,$$ then the Navier--Stokes equations  (\ref{NSE}) (with $\vec f=0$) have a global mild solution $\vec u\in E_{p,q,1}$ and $\vec\nabla\otimes\vec u\in E_{p/2,q/2,2}$ with 
$$ \|\vec u\|_{E_{p,q,1}}+\|\vec\nabla\otimes\vec u\|_{ E_{p/2,q/2,2}}\leq C_0 \| \mathbb{F}\|_{E_{p/2,q/2,2}}.$$
b)  There exists $\epsilon_0=\epsilon_0(p,q)>0$ and $C_0=C_0(p,q)>0$ such that, if   $\mathbb{F}\in F_{p/2,q/2,2}$ and if
  $$  \| \mathbb{F}\|_{F_{p/2,q/2,2}} <\epsilon_0,$$ then the Navier--Stokes equations   
(\ref{NSE})  (with $\vec f=0$) have a global mild solution $\vec u\in F_{p,q,1}$ and $\vec\nabla\otimes\vec u\in F_{p/2,q/2,2}$ with 
$$ \|\vec u\|_{F_{p,q,1}}+\|\vec\nabla\otimes\vec u\|_{ F_{p/2,q/2,2}}\leq C_0 \| \mathbb{F}\|_{F_{p/2,q/2,2}}.$$
  \end{theorem}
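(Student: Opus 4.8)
\noindent\textbf{Proof strategy for Theorem \ref{theo2}.} The plan is to realise a global mild solution of (\ref{NSE}) with $\vec f=0$ as a fixed point of $\vec u\mapsto \vec U_0-B(\vec u,\vec u)$, where, with $\mathbb{P}$ the Leray projection,
\[ \vec U_0(t,x)=\int_0^t e^{(t-s)\Delta}\mathbb{P}\,\Div\mathbb{F}(s,\cdot)\,ds,\qquad B(\vec u,\vec v)=\mathcal{L}(\vec u\otimes\vec v),\qquad \mathcal{L}(\mathbb{G})(t,x)=\int_0^t e^{(t-s)\Delta}\mathbb{P}\,\Div\mathbb{G}(s,\cdot)\,ds . \]
Since $\Div\vec u=0$ one has $\vec u\cdot\vec\nabla\vec u=\Div(\vec u\otimes\vec u)$, so applying $\mathbb{P}$ to (\ref{NSE}) and using Duhamel's formula with $\vec u(0,\cdot)=0$ gives exactly $\vec u=\vec U_0-B(\vec u,\vec u)$; the point is that the term $\int_0^t e^{(t-s)\Delta}\mathbb{P}\vec f\,ds$ is now absent, and it is precisely that term which forces the more restrictive range $p,q>3$ in Theorem \ref{theo1}. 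I would work in the Banach space $X_E$ of vector fields on $(0,\infty)\times\mathbb{R}^3$ with $\|\vec u\|_{X_E}=\|\vec u\|_{E_{p,q,1}}+\|\vec\nabla\otimes\vec u\|_{E_{p/2,q/2,2}}$ (and, for part (b), the analogous space $X_F$ built from $F$-norms), and conclude by the usual Picard contraction lemma: if $T$ is bounded bilinear on a Banach space $X$ with $\|T\|=C_B$ and $\|\vec U_0\|_X\le\eta<\frac1{4C_B}$, then $\vec u=\vec U_0-T(\vec u,\vec u)$ has a solution with $\|\vec u\|_X\le 2\eta$. Everything thus reduces to two estimates.

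The first is a product estimate $\|\vec u\otimes\vec v\|_{E_{p/2,q/2,2}}\le C\|\vec u\|_{E_{p,q,1}}\|\vec v\|_{E_{p,q,1}}$ (and the same with $F$). On each parabolic cylinder one applies Cauchy--Schwarz in the space variable (turning $L^{q/2}_y$ into $L^q_y\cdot L^q_y$) and then H\"older in the time variable (turning $L^{p/2}_s$ into $L^p_s\cdot L^p_s$); the scaling exponents match because $E_{p/2,q/2,2}$ carries exactly the square of the scaling of $E_{p,q,1}$, namely $2-\tfrac4p-\tfrac6q=2(1-\tfrac2p-\tfrac3q)$. For the $F$-norm the computation is identical with the two integrations performed in the opposite order.

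The second estimate, which is the heart of the matter, is the linear bound
\[ \|\mathcal{L}(\mathbb{G})\|_{E_{p,q,1}}+\|\vec\nabla\otimes\mathcal{L}(\mathbb{G})\|_{E_{p/2,q/2,2}}\le C\,\|\mathbb{G}\|_{E_{p/2,q/2,2}} \]
for $p,q\in(2,\infty)$ with $\tfrac2p+\tfrac3q>1$ (and its $F$-analogue). The kernel of $e^{(t-s)\Delta}\mathbb{P}\,\Div$ is, for $t>s$, a smooth function $K(t-s,x-y)$ which is parabolically homogeneous of degree $-(d+1)$ with $|\vec\nabla_xK|$ parabolically homogeneous of degree $-(d+2)$ (the Oseen tensor), so $\mathcal{L}$ behaves like a one-sided parabolic Riesz potential of order $1$ and $\vec\nabla\otimes\mathcal{L}$ like a one-sided anisotropic Calder\'on--Zygmund operator. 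I would prove both halves by the classical annular decomposition around the base point of a cylinder. The local block of $\mathcal{L}(\mathbb{G})$ is controlled by the scale times a parabolic anisotropic maximal function of $\mathbb{G}$, itself bounded in $L^p_sL^q_y$ (resp. $L^q_yL^p_s$) by the mixed-norm Fefferman--Stein inequality, which needs $p,q>1$, hence $p/2,q/2>1$, i.e. $p,q>2$ (this is exactly where the hypothesis $p,q\in(2,\infty)$ and the constraint $1<p/2,q/2$ in Definition 1 enter); for $\vec\nabla\otimes\mathcal{L}$ the local block is instead controlled by the $L^{p/2}_sL^{q/2}_y$ (resp. $L^{q/2}_yL^{p/2}_s$) boundedness of the anisotropic singular integral. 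Each far block is estimated directly by H\"older and Young in the mixed norm against the corresponding dyadic piece of $\|\mathbb{G}\|_{E_{p/2,q/2,2}}$ (resp. $F$), and the resulting geometric series in the dyadic index converges precisely because the scaling exponent $1-\tfrac2p-\tfrac3q$ is negative, i.e. $\tfrac2p+\tfrac3q>1$. In the diagonal case $p=q$ this is exactly the parabolic Morrey theory used for Navier--Stokes in \cite{PGL2}; the genuinely new point is to run these two kernel estimates while keeping the time- and space-integrations separated in the prescribed order, and I expect this bookkeeping — in particular checking that the one-sidedness in $s$ and the truncation to $t>0$ do not spoil the anisotropic maximal-function and singular-integral bounds — to be the main obstacle.

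Assembling: with $C_{\mathrm{lin}}$ the linear constant and $C_{\mathrm{prod}}$ the product constant, $B(\vec u,\vec v)=\mathcal{L}(\vec u\otimes\vec v)$ is bounded bilinear on $X_E$ with $C_B\le C_{\mathrm{lin}}C_{\mathrm{prod}}$, and $\vec U_0=\mathcal{L}(\mathbb{F})\in X_E$ with $\|\vec U_0\|_{X_E}\le C_{\mathrm{lin}}\|\mathbb{F}\|_{E_{p/2,q/2,2}}$, so choosing $\epsilon_0<\frac1{4C_{\mathrm{lin}}^2C_{\mathrm{prod}}}$ and $C_0=2C_{\mathrm{lin}}$ the Picard lemma produces a solution $\vec u\in E_{p,q,1}$ with $\vec\nabla\otimes\vec u\in E_{p/2,q/2,2}$ satisfying the stated bound. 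Part (b) follows by running the same argument throughout with the order of the $L^p_t$ and $L^q_x$ integrations interchanged, using the $F$-versions of the mixed-norm maximal and Calder\'on--Zygmund inequalities; no further idea is required.
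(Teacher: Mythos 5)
Your proposal is correct and follows essentially the same route as the paper: the paper also solves $\vec u=\vec T(\mathbb{P}\Div\mathbb{F})-\vec T(\mathbb{P}\Div(\vec u\otimes\vec u))$ by Picard iteration in the space normed by $\|\vec u\|_{E_{p,q,1}}+\|D_x\vec u\|_{E_{p/2,q/2,2}}$, with the quadratic term controlled by the H\"older product estimate $\|\vec u\otimes\vec u\|_{E_{p/2,q/2,2}}\le\|\vec u\|_{E_{p,q,1}}^2$. Your key linear bound is exactly the paper's Theorem \ref{theo4}, which it proves by the same two ingredients you describe (Hedberg's inequality plus the anisotropic mixed-norm maximal function for the $\mathcal{I}_1$ part, and a local/far parabolic Calder\'on--Zygmund decomposition for the gradient part), with the restriction to $p,q>2$ arising for the same reason you identify.
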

  \section{Heat equation} 
  The proof of   theorems \ref{theo1} and \ref{theo2} will rely on the theory of the heat equation $\partial_t u-\Delta  u=f$ on $\mathbb{R}\times\mathbb{R}^d$ with the boundary condition $u=0$ at infinity. This condition will be defined as follows:
  
  \begin{definition} Let $u$ be a tempered distribution on  $\mathbb{R}\times\mathbb{R}^d$. Then $u=0$ at infinity if for every $\theta>0$ one has $e^{\theta^2(\partial_t^2-\Delta^2)}u\in L^\infty(\mathbb{R}\times\mathbb{R}^d)$ and $\lim_{\theta\rightarrow +\infty} \|e^{\theta^2(\partial_t^2-\Delta^2)}u\|_\infty=0$.
  \end{definition}
  
  \begin{lemma} Let $u$ be a tempered distribution on  $\mathbb{R}\times\mathbb{R}^d$ such that $u=0$ at infinity. If $\partial_t u-\Delta  u=0$, then $u=0$.
  \end{lemma}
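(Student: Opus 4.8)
The plan is to exploit the uniqueness of solutions to the heat equation by combining a Liouville-type argument with the decay condition at infinity. First I would observe that since $u$ is a tempered distribution solving $\partial_t u - \Delta u = 0$ on the whole space $\mathbb{R}\times\mathbb{R}^d$, its space-time Fourier transform $\hat u(\tau,\xi)$ is supported on the parabola $\{i\tau + |\xi|^2 = 0\}$, i.e. $\{\tau = 0, \xi = 0\}$ after taking real and imaginary parts — more precisely, $(i\tau + |\xi|^2)\hat u = 0$ forces $\operatorname{supp}\hat u \subseteq \{(0,0)\}$. Hence $u$ is a polynomial in $(t,x)$. This is the structural heart of the matter: a tempered distribution solving the heat equation globally in time must be a polynomial (this is the parabolic analogue of the classical fact that harmonic tempered distributions are polynomials).

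Next I would use the hypothesis $u = 0$ at infinity to kill the polynomial. The operator $e^{\theta^2(\partial_t^2 - \Delta^2)}$ is a smoothing convolution operator (its symbol is $e^{-\theta^2(\tau^2 + |\xi|^4)}$, a Schwartz function on the frequency side), so $e^{\theta^2(\partial_t^2-\Delta^2)}u$ is a well-defined smooth function whenever $u$ is tempered. If $u = \sum_{|\alpha|+2k \le N} c_{k,\alpha} t^k x^\alpha$ is a polynomial, then $e^{\theta^2(\partial_t^2-\Delta^2)}u$ is again a polynomial of the same degree in $(t,x)$ whose leading-order part coincides with that of $u$ (the operator only modifies lower-order terms, since differentiating a monomial lowers its degree). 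A nonzero polynomial is unbounded on $\mathbb{R}\times\mathbb{R}^d$, contradicting the requirement $e^{\theta^2(\partial_t^2-\Delta^2)}u \in L^\infty$. Therefore all coefficients $c_{k,\alpha}$ vanish and $u = 0$; in fact even the boundedness alone (without the limit being zero) suffices here.

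The main obstacle to make rigorous is the first step: justifying that $(i\tau + |\xi|^2)\hat u = 0$ as tempered distributions implies $\operatorname{supp}\hat u \subseteq \{(0,0)\}$, and then that this forces $u$ to be a polynomial. The support inclusion is clear because $i\tau + |\xi|^2$ vanishes (as a function of the real variables $\tau,\xi$) only at the origin and is smooth and nonvanishing away from it, so one can divide by it locally; the passage from "distribution supported at a point" to "finite linear combination of derivatives of $\delta$" is the standard structure theorem, and its Fourier transform is a polynomial. A secondary technical point is to confirm that $e^{\theta^2(\partial_t^2-\Delta^2)}$ genuinely acts on tempered distributions as stated and maps polynomials to polynomials of the same degree with unchanged leading part — this is a routine computation with the convolution kernel, whose moments of order $0$ give the identity on the top-degree piece. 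Once these two points are settled, the contradiction with $L^\infty$-boundedness is immediate and the lemma follows.
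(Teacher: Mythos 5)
Your overall strategy is exactly the paper's: Fourier transform gives $(i\tau+|\xi|^2)\hat u=0$, hence $\operatorname{supp}\hat u\subseteq\{(0,0)\}$, hence $u$ is a polynomial, and then the condition ``$u=0$ at infinity'' eliminates the polynomial. However, there is one genuine error in your final step. You assert that ``a nonzero polynomial is unbounded on $\mathbb{R}\times\mathbb{R}^d$'' and conclude that ``even the boundedness alone (without the limit being zero) suffices.'' Both claims fail for constants: a nonzero constant $c$ is a nonzero polynomial, it is bounded, and since the symbol $e^{-\theta^2(\tau^2+|\xi|^4)}$ equals $1$ at the origin (equivalently, the convolution kernel has total mass $1$), one has $e^{\theta^2(\partial_t^2-\Delta^2)}c=c$ for every $\theta$. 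So the boundedness requirement only forces $u$ to be a \emph{constant}; it does not force $u=0$. To finish you must invoke the second half of the definition, namely $\lim_{\theta\to+\infty}\|e^{\theta^2(\partial_t^2-\Delta^2)}u\|_\infty=0$, which for a constant $u=c$ reads $|c|=0$. This is precisely how the paper closes the argument: boundedness reduces $u$ to a constant, and the vanishing limit kills the constant.

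Everything else in your proposal is sound and matches the paper: the division by $i\tau+|\xi|^2$ away from the origin, the structure theorem for distributions supported at a point, and the fact that $e^{\theta^2(\partial_t^2-\Delta^2)}$ maps a polynomial to a polynomial of the same degree with the same leading part (so any nonconstant polynomial indeed yields an unbounded image, contradicting $e^{\theta^2(\partial_t^2-\Delta^2)}u\in L^\infty$). Once you retract the parenthetical claim and use the limit condition for the residual constant, the proof is complete.
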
 
  \begin{proof} Take the Fourier transform (in time and space variables) of the heat equation. We find $$(i\tau+\vert\xi\vert^2) \hat u(\tau,\xi)=0.$$ Thus, $\hat u$ is supported in $\{(0,0)\}$, hence is a sum of derivatives of the Dirac mass, and $u$ is a polynomial. As $e^{\theta^2(\partial_t^2 -\Delta^2)}u$ is then a polynomial, and as $e^{\theta^2(\partial_t^2-\Delta^2)}u\in L^\infty(\mathbb{R}\times\mathbb{R}^d)$, we find that $u$ is a constant; in that case, $e^{\theta^2(\partial_t^2-\Delta^2)}u=u$. As $u=0$ at infinity, we have $u=0$.   
  \end{proof}

     \begin{lemma} Let $1<p,q<+\infty$ and $\beta\in (0, \frac 2 p+\frac d q)$. If $u\in E_{p,q,\beta}$ or  $u\in F_{p,q,\beta}$, then $u=0$ at infinity.
  \end{lemma}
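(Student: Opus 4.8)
The plan is to realise $e^{\theta^2(\partial_t^2-\Delta^2)}$ as convolution with an explicit Schwartz kernel obeying the parabolic scaling of the spaces $E_{p,q,\beta}$, $F_{p,q,\beta}$, and then to estimate $\|K_\theta\ast u\|_\infty$ directly from the Morrey bound by slicing the integration domain into parabolic annuli.

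\emph{The kernel.} Since $\partial_t^2-\Delta^2$ has Fourier symbol $-\tau^2-|\xi|^4$, the operator $e^{\theta^2(\partial_t^2-\Delta^2)}$ is the Fourier multiplier with symbol $e^{-\theta^2\tau^2}e^{-\theta^2|\xi|^4}$, which is a Schwartz function of $(\tau,\xi)$; hence $e^{\theta^2(\partial_t^2-\Delta^2)}u=K_\theta\ast u$ with $K_\theta=g_\theta\otimes h_\theta$, $\widehat{g_\theta}(\tau)=e^{-\theta^2\tau^2}$, $\widehat{h_\theta}(\xi)=e^{-\theta^2|\xi|^4}$. By dilation, $g_\theta(t)=\theta^{-1}g_1(t/\theta)$ with $g_1$ a fixed Gaussian and $h_\theta(x)=\theta^{-d/2}h_1(x/\sqrt\theta)$ with $h_1\in\mathcal S(\mathbb R^d)$, so $K_\theta(t,x)=\theta^{-1-d/2}K_1(t/\theta,x/\sqrt\theta)$, $\|K_\theta\|_1=\|K_1\|_1$, and for every $N$,
$$ |K_\theta(t,x)|\le C_N\,\theta^{-1-\frac{d}{2}}\Bigl(1+\tfrac{|t|}{\theta}\Bigr)^{-N}\Bigl(1+\tfrac{|x|}{\sqrt\theta}\Bigr)^{-N}. $$
Thus $K_\theta$ lives, up to rapidly decaying tails, on the parabolic cylinder of spatial radius $\rho_0:=\sqrt\theta$ and temporal radius $\rho_0^2=\theta$ — precisely the scale at which the Morrey norms control $u$. (Note also that the Morrey bound with $\rho=1$ gives $u\in L^1_{\mathrm{uloc}}\subset\mathcal S'$, so the convolution makes sense.)

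\emph{The annular estimate.} Fix $(t_0,x_0)$ and, with $\rho_0=\sqrt\theta$, write $Q_r$ for the parabolic cylinder $\{|t-t_0|<r^2,\ |x-x_0|<r\}$ and decompose $\mathbb R\times\mathbb R^d=\bigcup_{j\ge0}A_j$ with $A_0=Q_{\rho_0}$, $A_j=Q_{2^j\rho_0}\setminus Q_{2^{j-1}\rho_0}$ for $j\ge1$. If $(s,y)\in A_j$, $j\ge1$, then $|t_0-s|\ge 4^{j-1}\theta$ or $|x_0-y|\ge2^{j-1}\sqrt\theta$, so the pointwise bound above yields $|K_\theta(t_0-s,x_0-y)|\le C_N\theta^{-1-d/2}2^{-jN}$, a bound also valid (trivially) on $A_0$. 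On the other hand, two applications of Hölder's inequality (in $x$ then $t$ for $E_{p,q,\beta}$, in $t$ then $x$ for $F_{p,q,\beta}$) and the definition of the norm give, writing $\|u\|$ for $\|u\|_{E_{p,q,\beta}}$ or $\|u\|_{F_{p,q,\beta}}$,
$$ \int_{A_j}|u|\,dy\,ds\ \le\ \int_{Q_{2^j\rho_0}}|u|\,dy\,ds\ \le\ C_{d,p,q}\,(2^j\rho_0)^{\,d+2-\beta}\,\|u\|. $$

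\emph{Summation and conclusion.} Multiplying the two estimates and summing over $j$, and using $\rho_0=\sqrt\theta$ so that $\theta^{-1-\frac{d}{2}}\rho_0^{\,d+2-\beta}=\theta^{-\beta/2}$,
$$ |K_\theta\ast u(t_0,x_0)|\ \le\ C\,\|u\|\,\theta^{-\beta/2}\sum_{j\ge0}2^{\,j(d+2-\beta-N)}. $$
Since $\beta<\frac{2}{p}+\frac{d}{q}<d+2$, we may fix $N>d+2-\beta$ and the series converges; the same computation with $|u|$ shows the defining convolution integral converges absolutely, so $e^{\theta^2(\partial_t^2-\Delta^2)}u$ is the bounded function $K_\theta\ast u$. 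The bound being uniform in $(t_0,x_0)$ and $\beta>0$, we conclude
$$ \|e^{\theta^2(\partial_t^2-\Delta^2)}u\|_\infty\ \le\ C(p,q,\beta,d)\,\|u\|\,\theta^{-\beta/2}\ \xrightarrow[\theta\to+\infty]{}\ 0, $$
which is the assertion $u=0$ at infinity. There is no deep obstacle; the point to get right is the scale matching ($h_\theta$ concentrates at spatial scale $\sqrt\theta$ against $g_\theta$'s temporal scale $\theta$) and the ensuing exponent arithmetic, which produces the decaying factor $\theta^{-\beta/2}$ exactly because $\beta>0$ and forces $N>d+2-\beta$ for the tail sum.
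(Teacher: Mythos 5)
Your proof is correct and follows essentially the same route as the paper: both realize $e^{\theta^2(\partial_t^2-\Delta^2)}$ as convolution with a Schwartz kernel concentrated at the parabolic scale (time $\theta$, space $\sqrt\theta$), decompose the integral into parabolic annuli, pair the kernel's rapid decay against the Morrey norm via H\"older, and extract the factor $\theta^{-\beta/2}$, which tends to $0$ because $\beta>0$. The only (immaterial) difference is that you carry out the annulus estimate directly at scale $\sqrt\theta$, whereas the paper first proves the scale-one bound $\iint |W|\,|u|\,dt\,dx\leq C\|u\|_{E_{p,q,\beta}}$ and then invokes the parabolic scaling invariance $\|u(\theta\cdot,\sqrt\theta\cdot)\|_{E_{p,q,\beta}}=\theta^{-\beta/2}\|u\|_{E_{p,q,\beta}}$.
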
 
  \begin{proof} We give the proof for $u\in E_{p,q,\beta}$ (the case $u\in F_{p,q,\beta}$ is proved in a similar way). 
 Let $W(t,x)$ be the inverse Fourier transform of $e^{-\tau^2-\vert\xi\vert^4}$.  $W$ is in the Schwartz class of smooth functions with rapid decay. In particular,  $(\sqrt{\vert t\vert}+\vert\xi\vert^{d+2} \vert W(t,x)\vert \in L^\infty(\mathbb{R}\times\mathbb{R}^d)$. Writing
 \begin{equation*}\begin{split} \iint  \vert W(t,x)\vert\, \vert u(t,x)\vert \, dt \, dx&\\=
 \iint_{\sqrt \vert t\vert+ \vert x\vert<1}   \vert W(t,x)\vert\, \ \vert u(t,x)\vert \, dt \, dx +& \sum_{j=0}^{+\infty} 
 \iint_{2^j<\sqrt \vert t\vert+ \vert x\vert<2^{j+1}}   \vert W(t,x)\vert\, \ \vert u(t,x)\vert \, dt \, dx 
 \\\leq C \|u\|_{E_{p,q,\beta}}(\|\mathds{1}_{\sqrt \vert t\vert+ \vert x\vert<1}  W\|_{L^{\frac{p}{p-1}}_tL^{\frac q{q-1}}_x}&+ \sum_{j=0}^{+\infty}  2^{j(\frac 2 p+\frac d q-\beta)} \|\mathds{1}_{2^j<\sqrt \vert t\vert+ \vert x\vert<2^{j+1}}  W\|_{L^{\frac{p}{p-1}}_tL^{\frac q{q-1}}_x})
 \\ \leq C' \|u\|_{E_{p,q,\beta}}( 1+ \sum_{j=0}^{+\infty}  2^{-j \beta}      )=&C_1 \|u\|_{E_{p,q,\beta}},
 \end{split} \end{equation*} we get
 \begin{equation*}\begin{split} \vert e^{\theta^2(\partial_t^2-\Delta^2)}u(t,x)\vert \leq & \iint W(s,y) \vert u(t+\theta^2 s ,x+\theta y )\vert \, ds\, dy
 \\ \leq & C_1 \|u(t+\theta., x+\sqrt\theta .)\|_{E_{p,q,\beta}}\\=&C_1 \|u(\theta., \sqrt\theta .)\|_{E_{p,q,\beta}}\\=&C_1 \| u\|_{E_{p,q,\beta}}\theta^{-\beta/2}.
  \end{split} \end{equation*} Thus, $\lim_{\theta\rightarrow +\infty} \|e^{\theta^2(\partial_t^2-\Delta^2)}u\|_\infty=0$.
  \end{proof}
  
  The proof of Theorems \ref{theo1} and \ref{theo2} will be based on the following results on the linear heat equation:

     \begin{theorem}\label{theo3}
  Let $p,q\in (3,\infty)$ and $\beta>0$ with $\frac 2 p+\frac 3 q>\beta$. \\ a) If $  f\in E_{p/3,q/3,\beta+2}(\mathbb{R}\times\mathbb{R}^d)$, then the heat equation  
\begin{equation}\label{heat}\partial_t u=\Delta u+ f\end{equation} has a unique solution such that $u=0$ at infinity. Moreover, there exists   $C_0=C_0(p,q,\beta)>0$ such that, if denoting by $D_xu$ the gradient of $u$ and by $D^2_xu$ the Hessian of $u$ (with respect to the space variable), we  have  $$ \|  u\|_{E_{p,q,\beta}}+\| D_xu\|_{ E_{p/2,q/2,\beta+1}} +\| D^2_xu\|_{ E_{p/3,q/3,\beta+2}}\leq C_0 \|  f\|_{E_{p/3,q/3,\beta+2}} .$$
b)  I$  f\in F_{p/3,q/3,\beta+2}(\mathbb{R}\times\mathbb{R}^d)$, then the heat equation  
(\ref{heat}) has a unique solution such that $u=0$ at infinity. Moreover, there exists   $C_0=C_0(p,q,\beta)>0$ such that we  have  $$ \|  u\|_{F_{p,q,\beta}}+\| D_xu\|_{ F_{p/2,q/2,\beta+1}} +\| D^2_xu\|_{ F_{p/3,q/3,\beta+2}}\leq C_0 \|  f\|_{F_{p/3,q/3,\beta+2}} .$$
  \end{theorem}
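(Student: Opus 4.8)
The plan is to produce the solution through the explicit representation $u=\Gamma*f$, where $\Gamma(t,x)=\mathds{1}_{t>0}(4\pi t)^{-d/2}e^{-\vert x\vert^2/(4t)}$ is the fundamental solution of $\partial_t-\Delta$ on $\mathbb{R}\times\mathbb{R}^d$, and then to extract the three bounds from the parabolic decay estimates $\vert D^k_x\Gamma(t,x)\vert\lesssim\mathds{1}_{t>0}(\sqrt{t}+\vert x\vert)^{-(d+k)}$, $k=0,1,2$. Uniqueness is immediate from Lemma 1: two solutions of (\ref{heat}) vanishing at infinity differ by a solution of the homogeneous heat equation that vanishes at infinity, hence coincide. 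For existence I would first observe that $\vert u\vert\le I_2\vert f\vert$ and $\vert D_xu\vert\le I_1\vert f\vert$ pointwise, where $I_\alpha$ denotes the parabolic Riesz potential with kernel $(\sqrt{\vert t\vert}+\vert x\vert)^{-(d+2-\alpha)}$; since $I_2\vert f\vert<+\infty$ almost everywhere (see the Hedberg-type bound below), $\Gamma*f$ is well defined, and $(\partial_t-\Delta)(\Gamma*f)=f$ follows from $(\partial_t-\Delta)\Gamma=\delta_0$. Once the Morrey bounds are established, $u=\Gamma*f$ lies in $E_{p,q,\beta}$, so $u$ vanishes at infinity by Lemma 2 and is the solution claimed.

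The bound on $D^2_xu$ is the core. Here $D^2_xu=(D^2_x\Gamma)*f$ and $D^2_x\Gamma$ is a parabolic Calder\'on--Zygmund kernel: homogeneous of parabolic degree $-(d+2)$, smooth off the origin, with vanishing integral over parabolic spheres. The first step is the estimate on $\mathbb{R}\times\mathbb{R}^d$ in mixed Lebesgue norm, $\|D^2_xu\|_{L^{p/3}_tL^{q/3}_x}\lesssim\|f\|_{L^{p/3}_tL^{q/3}_x}$ and the same with the two norms interchanged; this is Krylov's mixed-norm parabolic maximal regularity \cite{KRY}, and it can equally be obtained from the boundedness on mixed-norm $L^{p/3}_tL^{q/3}_x$ of the Fourier multiplier $\xi_j\xi_k/(i\tau+\vert\xi\vert^2)$, which satisfies the parabolic Mihlin--Lizorkin conditions; this is where the restriction $p,q>3$ (so that $p/3,q/3>1$) is used. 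The second step upgrades this to the Morrey bound by the classical localization: for a parabolic cylinder $Q_\rho$ of radius $\rho$, split $f=f\mathds{1}_{Q_{2\rho}}+f\mathds{1}_{(Q_{2\rho})^c}$; the first piece is controlled on $Q_\rho$ by the full-space estimate together with $\|f\|_{L^{p/3}_tL^{q/3}_x(Q_{2\rho})}\lesssim\rho^{6/p+3d/q-(\beta+2)}\|f\|_{E_{p/3,q/3,\beta+2}}$, and the second piece is bounded pointwise on $Q_\rho$ by cutting $(Q_{2\rho})^c$ into the regions at parabolic distance $\sim 2^j\rho$ from $Q_\rho$ ($j\ge1$), using $\vert D^2_x\Gamma\vert\lesssim(2^j\rho)^{-(d+2)}$ there and $\iint_{Q_{2^{j+1}\rho}}\vert f\vert\lesssim(2^j\rho)^{d-\beta}\|f\|_{E_{p/3,q/3,\beta+2}}$; the series $\sum_{j\ge1}(2^j\rho)^{-(\beta+2)}$ converges because $\beta+2>0$.

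For $u$ and $D_xu$ the kernels $\Gamma$ and $D_x\Gamma$ are, in absolute value, dominated by the locally integrable parabolic Riesz kernels of orders $2$ and $1$, so $\vert u\vert\le I_2\vert f\vert$ and $\vert D_xu\vert\le I_1\vert f\vert$, and I would use the Hedberg-type inequality for parabolic Riesz potentials on Morrey spaces: for $0<\alpha<\gamma$, $g\in E_{p/3,q/3,\gamma}$ and $r>0$,
\[
I_\alpha g(z)\lesssim r^\alpha\,M_{\mathrm{par}}g(z)+r^{\alpha-\gamma}\,\|g\|_{E_{p/3,q/3,\gamma}},
\]
with $M_{\mathrm{par}}$ the parabolic Hardy--Littlewood maximal operator; optimizing over $r$ gives $I_\alpha g\lesssim(M_{\mathrm{par}}g)^{1-\alpha/\gamma}\|g\|_{E_{p/3,q/3,\gamma}}^{\alpha/\gamma}$. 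Taking the $E_{p,q,\beta}$, resp.\ the $E_{p/2,q/2,\beta+1}$, norm, using the scaling identity $\|h^\theta\|_{E_{a,b,\delta}}=\|h\|_{E_{\theta a,\theta b,\delta/\theta}}^\theta$ and the boundedness of $M_{\mathrm{par}}$ on the mixed-norm Morrey spaces $E_{a,b,\delta}$ (which, by the same localization, reduces to its boundedness on $L^a_tL^b_x$, itself a consequence of the iterated one-variable maximal inequality) reduces the matter to $g=f\in E_{p/3,q/3,\beta+2}$, the exponents $(p,q,\beta)$ and $(p/2,q/2,\beta+1)$ being precisely those for which this closes; alternatively the bound on $D_xu$ can be deduced from those on $D^2_xu$ and $\partial_tu$ by Gagliardo--Nirenberg interpolation, $p/2$ being the harmonic mean of $p$ and $p/3$. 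The spaces $F$ are handled identically, since H\"older's, Young's and Minkowski's inequalities and the maximal inequalities involved are insensitive to the order of the $t$- and $x$-integrations.

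The step I expect to be the main obstacle is the mixed-norm parabolic maximal regularity above: unlike its single-exponent analogue it genuinely requires mixed-norm Calder\'on--Zygmund / Fourier-multiplier theory. Once it is in hand, what remains is book-keeping — one has to check that the dyadic sums over scales converge, and this is exactly where the hypotheses $\beta>0$ and $\frac2p+\frac dq>\beta$ (which also keep all the Morrey spaces appearing nontrivial) are used.
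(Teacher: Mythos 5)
Your proposal follows essentially the same route as the paper's proof: the Duhamel/heat-kernel representation, mixed-norm parabolic maximal regularity (via Benedek--Calder\'on--Panzone vector-valued singular integrals, equivalently the multiplier $\xi_i\xi_j/(i\tau+\vert\xi\vert^2)$) upgraded to the Morrey scale by the near/far localization for $D^2_xu$, and pointwise domination of $u$ and $D_xu$ by the parabolic Riesz potentials $\mathcal{I}_2$ and $\mathcal{I}_1$ combined with Hedberg's inequality and the boundedness of the anisotropic maximal function on the mixed-norm Morrey spaces, with uniqueness from the Liouville-type lemma for distributions vanishing at infinity. The argument is correct and matches the paper's in all essentials.
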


     \begin{theorem}\label{theo4}
  Let $p,q\in (2,\infty)$ and $\beta>0$ with $\frac 2 p+\frac 3 q>\beta$.  Let $\sigma(D)$ be a Fourier multiplier (in the space variable) such that $\sigma (\xi)=  {\sigma_0(\frac \xi{\vert\xi\vert})} $ where $\sigma_0$ is a smooth function on the sphere $\mathcal{S}^{d-1}$.\\ 
  a) If $  \mathbb{F}\in E_{p/2,q/2,\beta+1}(\mathbb{R}\times\mathbb{R}^d)$, then the heat equation  
\begin{equation}\label{heat2}\partial_t u=\Delta u+ \sigma(D)\Div\mathbb{F}\end{equation} has a unique solution such that $u=0$ at infinity. Moreover, there exists   $C_0=C_0(p,q,\beta,\sigma)>0$ such that  we  have  
$$ \|  u\|_{E_{p,q,\beta}}+\| D_xu\|_{ E_{p/2,q/2,\beta+1}}  \leq C_0 \|  \mathbb{F}\|_{E_{p/2,q/2,\beta+1}} .$$
b)   If $  \mathbb{F}\in F_{p/2,q/2,\beta+1}(\mathbb{R}\times\mathbb{R}^d)$, then the heat equation  
(\ref{heat2})  has a unique solution such that $u=0$ at infinity. Moreover, there exists   $C_0=C_0(p,q,\beta,\sigma)>0$ such that  we  have  $$ \|  u\|_{F_{p,q,\beta}}+\| D_xu\|_{ F_{p/2,q/2,\beta+1}}  \leq C_0 \|  \mathbb{F}\|_{F_{p/2,q/2,\beta+1}} .$$
  \end{theorem}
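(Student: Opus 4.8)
The plan is to follow the scheme of Theorem~\ref{theo3}: write the solution as an explicit space-time convolution and reduce everything to the mapping properties of two kernels. The only candidate for a solution vanishing at infinity is $u=\mathcal L*\mathbb F$, where $\mathcal L$ has full space-time Fourier symbol $\sigma(\xi)i\xi_j/(i\tau+|\xi|^2)$ (acting on the $j$-th divergence index of $\mathbb F$); equivalently $\mathcal L(t,x)=\mathds 1_{t>0}(e^{t\Delta}\sigma(D)\partial_j\delta_0)(x)$. Parabolic homogeneity forces $\mathcal L(t,x)=\mathds 1_{t>0}\,t^{-(d+1)/2}\Phi(x/\sqrt t)$ with $\Phi$ smooth off the origin, and the key (Oseen-type) point is the decay $|\Phi(y)|\lesssim(1+|y|)^{-(d+1)}$, which comes from the conic singularity at $\xi=0$ of the degree-one homogeneous factor $\sigma(\xi)\xi_j$; hence $|\mathcal L(t,x)|\lesssim\mathds 1_{t>0}(\sqrt t+|x|)^{-(d+1)}$, a kernel that is integrable for the parabolic measure since $d+1<d+2$. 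Together with $\beta>0$ and the Morrey bound on $\mathbb F$ this makes $u=\mathcal L*\mathbb F$ an absolutely convergent integral, hence a well-defined tempered distribution of polynomial growth solving (\ref{heat2}). Differentiating in $x$, $D_xu=m(D_{t,x})\mathbb F$ with $m(\tau,\xi)=-\sigma(\xi)\xi_j\xi_l/(i\tau+|\xi|^2)$: this symbol is bounded, is homogeneous of degree $0$ for the parabolic dilations, is smooth off the origin — near $\xi=0$ the vanishing factor $\xi_j\xi_l$ absorbs the mere smoothness-on-the-sphere of $\sigma$ — and satisfies Mikhlin/Marcinkiewicz estimates adapted to the product structure $\mathbb R_t\times\mathbb R^d_x$ (checked by differentiating $m$ directly, distinguishing $|\xi|\lesssim\sqrt{|\tau|}$ from $|\xi|\gtrsim\sqrt{|\tau|}$), so $m(D_{t,x})$ is bounded on $L^2(\mathbb R\times\mathbb R^d)$ and on the mixed-norm spaces $L^a_tL^b_x$ and $L^b_xL^a_t$ for all $1<a,b<+\infty$. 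Uniqueness is then immediate: the difference of two solutions vanishing at infinity solves the homogeneous heat equation and vanishes at infinity, hence is $0$ by the first lemma of this section, while the solution just constructed lies in $E_{p,q,\beta}$ (resp.\ $F_{p,q,\beta}$) by the estimates below, so it vanishes at infinity by the lemma asserting that elements of $E_{p,q,\beta}$ and $F_{p,q,\beta}$ do.

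For the a priori bounds, fix a parabolic ball $Q=Q(z_0,\rho)$ and split $\mathbb F=\mathds 1_{2Q}\mathbb F+\mathds 1_{(2Q)^c}\mathbb F$; one uses repeatedly the elementary consequence of mixed-norm H\"older and of the Morrey norm that $\iint_{Q(\cdot,R)}|\mathbb F|\lesssim R^{\,d+1-\beta}\|\mathbb F\|_{E_{p/2,q/2,\beta+1}}$ (and likewise for $F$). For $u$: on $Q$ the far part is bounded pointwise by $\sum_{2^j\gtrsim\rho}2^{-j(d+1)}\iint_{d_{\mathrm{par}}(z,\cdot)\sim 2^j}|\mathbb F|\lesssim\sum_{2^j\gtrsim\rho}2^{-j\beta}\|\mathbb F\|_{E_{p/2,q/2,\beta+1}}\lesssim\rho^{-\beta}\|\mathbb F\|_{E_{p/2,q/2,\beta+1}}$ (this uses $\beta>0$), which integrated over $Q$ yields exactly the factor $\rho^{\frac2p+\frac dq-\beta}$. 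The near part $\mathcal L*(\mathds 1_{2Q}\mathbb F)$ is dominated pointwise by $C\,I_1(\mathds 1_{2Q}|\mathbb F|)$, $I_1$ being the parabolic Riesz potential of order $1$, so the required bound reduces to the parabolic-Morrey analogue of the Adams theorem, $I_1\colon E_{p/2,q/2,\beta+1}\to E_{p,q,\beta}$ with the correct homogeneity (which one can also prove directly by a Hedberg-type splitting at the right scale against the parabolic Hardy--Littlewood maximal function). Combining and taking the supremum over $(\rho,z_0)$ gives $\|u\|_{E_{p,q,\beta}}\lesssim\|\mathbb F\|_{E_{p/2,q/2,\beta+1}}$; the $F$-version is identical with the order of the $t$- and $x$-integrations exchanged.

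Estimate of $D_xu$ — this is the crux, because the kernel $\mathcal L_1:=D_x\mathcal L$ satisfies only $|\mathcal L_1(t,x)|\lesssim\mathds 1_{t>0}(\sqrt t+|x|)^{-(d+2)}$, which is \emph{not} parabolically integrable, so cancellation is needed. With the same ball-splitting, the far part $\int_{(2Q)^c}\mathcal L_1(z-\cdot)\mathbb F$ is still estimated trivially: on $Q$ it is $\lesssim\sum_{2^j\gtrsim\rho}2^{-j(d+2)}\iint_{d_{\mathrm{par}}(z,\cdot)\sim 2^j}|\mathbb F|\lesssim\sum_{2^j\gtrsim\rho}2^{-j(1+\beta)}\|\mathbb F\|_{E_{p/2,q/2,\beta+1}}\lesssim\rho^{-(1+\beta)}\|\mathbb F\|_{E_{p/2,q/2,\beta+1}}$ (using only $1+\beta>0$), and integration over $Q$ produces precisely $\rho^{\frac4p+\frac{2d}q-(\beta+1)}$. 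The near part is $m(D_{t,x})(\mathds 1_{2Q}\mathbb F)$, which by the multiplier theorem recalled above is bounded in $L^{p/2}_tL^{q/2}_x$ by $C\|\mathds 1_{2Q}\mathbb F\|_{L^{p/2}_tL^{q/2}_x}\lesssim(2\rho)^{\frac4p+\frac{2d}q-(\beta+1)}\|\mathbb F\|_{E_{p/2,q/2,\beta+1}}$, again the required bound; the supremum over $(\rho,z_0)$ gives $\|D_xu\|_{E_{p/2,q/2,\beta+1}}\lesssim\|\mathbb F\|_{E_{p/2,q/2,\beta+1}}$. For the $F$-spaces one uses instead the boundedness of $m(D_{t,x})$ on $L^{q/2}_xL^{p/2}_t$, the rest being word for word the same.

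The step I expect to be the main obstacle is exactly this $D_xu$-estimate, and within it the harmonic analysis of the anisotropic symbol $m$: one must verify that $m(\tau,\xi)$ genuinely satisfies the Mikhlin/Marcinkiewicz conditions for the product group $\mathbb R_t\times\mathbb R^d_x$ — the delicate region being $\xi\to0$, where the conclusion rests on $\xi_j\xi_l$ killing the conic singularity of $\sigma$ — so that the associated parabolic singular integral is bounded on the mixed-norm Lebesgue spaces $L^{p/2}_tL^{q/2}_x$ and $L^{q/2}_xL^{p/2}_t$ over the whole admissible range (this is where $p,q>2$ enters). The remaining difficulty is more bookkeeping: checking that the near/far splitting respects the parabolic Morrey scaling, which is precisely where $\beta>0$ and $\frac2p+\frac3q>\beta$ are used to keep every geometric series convergent and every Morrey index admissible.
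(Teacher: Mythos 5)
Your proposal is correct and follows essentially the same route as the paper: the bound on $D_xu$ comes from the parabolic maximal-regularity singular integral, transferred from mixed-norm Lebesgue spaces to the Krylov spaces by a near/far splitting on parabolic cylinders; the bound on $u$ itself comes from pointwise domination by the parabolic Riesz potential $\mathcal{I}_1$ together with a Hedberg-type inequality; and uniqueness follows from the vanishing-at-infinity lemmas. The only cosmetic difference is that you verify the mixed-norm boundedness of the multiplier $m(\tau,\xi)=-\sigma(\xi)\xi_j\xi_l/(i\tau+|\xi|^2)$ by a product-type Mikhlin/Marcinkiewicz condition, whereas the paper obtains it (Proposition \ref{regmax}) from vector-valued Calder\'on--Zygmund theory in the style of Benedek--Calder\'on--Panzone, using the kernel decay of Lemma \ref{size}.
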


    Theorem  \ref{theo3} has the following corollary on Sobolev--Morrey inequalities:
    \begin{corollary}\label{sobolev} Let $p,q\in (3,\infty)$ and $\beta>0$ with $\frac 2 p+\frac 3 q>\beta$.  Let $u=0$ at infinity.  Then  there exists   $C_0=C_0(p,q,\beta)>0$ such that we  have  $$ \|  u\|_{E_{p,q,\beta}}+\| D_xu\|_{ E_{p/2,q/2,\beta+1}} +\| D^2_xu\|_{ E_{p/3,q/3,\beta+2}}\leq C_0  (\|\partial_tu\|_{E_{p/3,q/3,\beta+2}} +\|\Delta u\|_{E_{p/3,q/3,\beta+2}})$$ and
   $$ \|  u\|_{F_{p,q,\beta}}+\| D_xu\|_{ F_{p/2,q/2,\beta+1}} +\| D^2_xu\|_{ F_{p/3,q/3,\beta+2}}\leq C_0 (\|\partial_tu\|_{F_{p/3,q/3,\beta+2}} +\|\Delta u\|_{F_{p/3,q/3,\beta+2}}). $$
    \end{corollary}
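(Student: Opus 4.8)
The plan is to derive Corollary \ref{sobolev} directly from Theorem \ref{theo3} by the standard trick of writing $u$ as the solution of the heat equation with its own data. Suppose $u=0$ at infinity and set $f=\partial_t u-\Delta u$. We would first observe that since $u=0$ at infinity, both $\partial_t u$ and $\Delta u$ are well-defined tempered distributions that also vanish at infinity in the sense of the definition (because $e^{\theta^2(\partial_t^2-\Delta^2)}$ commutes with $\partial_t$ and $\Delta$, and these operators are bounded multipliers after composing with the smoothing; alternatively one checks this directly using that $u$ is locally integrable and the hypotheses put $\partial_t u,\Delta u\in E_{p/3,q/3,\beta+2}$ or $F_{p/3,q/3,\beta+2}$). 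Hence $f\in E_{p/3,q/3,\beta+2}$ (resp.\ $F_{p/3,q/3,\beta+2}$) with
$$\|f\|_{E_{p/3,q/3,\beta+2}}\le \|\partial_t u\|_{E_{p/3,q/3,\beta+2}}+\|\Delta u\|_{E_{p/3,q/3,\beta+2}},$$
and likewise for the $F$-norms.

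Next I would invoke Theorem \ref{theo3} for this particular $f$: it produces a solution $v$ of $\partial_t v=\Delta v+f$ with $v=0$ at infinity and with the a priori bounds
$$\|v\|_{E_{p,q,\beta}}+\|D_xv\|_{E_{p/2,q/2,\beta+1}}+\|D_x^2 v\|_{E_{p/3,q/3,\beta+2}}\le C_0\|f\|_{E_{p/3,q/3,\beta+2}},$$
and similarly in the $F$ scale. Now $u$ and $v$ both solve the same heat equation $\partial_t w-\Delta w=f$ and both vanish at infinity, so $u-v$ solves the homogeneous heat equation and vanishes at infinity; by the first lemma of Section~2, $u-v=0$, i.e.\ $u=v$. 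Substituting $u$ for $v$ in the displayed estimate and combining with the bound on $\|f\|$ from the previous paragraph gives exactly the two inequalities claimed in the corollary (the $E$-version and the $F$-version), with the same constant $C_0=C_0(p,q,\beta)$ up to a factor of $2$ absorbed into $C_0$.

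The only genuinely non-routine point — and the step I would single out as the main obstacle — is justifying that $f=\partial_t u-\Delta u$ lies in the relevant Morrey space and, more delicately, that $u$ actually belongs to the uniqueness class to which the Lemma applies, i.e.\ that $u=0$ at infinity is preserved when we pass between $u$ and the solution furnished by Theorem \ref{theo3}. Both are handled by the hypothesis: the corollary \emph{assumes} $\partial_t u$ and $\Delta u$ are in $E_{p/3,q/3,\beta+2}$ (resp.\ $F$) by assuming the right-hand side is finite, and $u=0$ at infinity is a standing hypothesis on $u$. So the argument is essentially a bookkeeping exercise once Theorem \ref{theo3} is in hand; the content is entirely in that theorem, and the corollary is the expected "elliptic/parabolic regularity implies embedding" consequence, formulated so that no decay or integrability assumption on $u$ beyond "$u=0$ at infinity" is needed. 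If one wanted to be careful, one could also note that when the right-hand side is infinite there is nothing to prove, so we may assume $\partial_t u,\Delta u$ are in the stated spaces from the outset.
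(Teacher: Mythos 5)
Your proposal is correct and follows exactly the paper's route: the paper's own proof is the one-line observation that one writes $f=\partial_t u-\Delta u$ and applies Theorem \ref{theo3}, with the identification $u=v$ via the uniqueness lemma left implicit. Your version merely spells out the bookkeeping (triangle inequality for $\|f\|$, uniqueness of the solution vanishing at infinity) that the paper takes for granted.
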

    
    Another corollary is the following theorem of Krylov \cite{KRY}:
    \begin{proposition} \label{theokryl}   Let $p,q\in (3,\infty)$ and $\beta>0$ with $\frac 2 p+\frac 3 q>\max(\beta,1)$. 
     There exists $\epsilon_0=\epsilon_0(p,q,\beta)>0$ and $C_0=C_0(p,q,\beta)>0$ such that
     \\ a)  If $  f\in E_{p/3,q/3,\beta+2}(\mathbb{R}\times\mathbb{R}^d)$, $  \vec b\in E_{p,q, 1}$, $  c\in E_{p/2,q/2, 2} $, and if  $\|  \vec b\|_{E_{p,q,1}}+\|  c\|_{E_{p/2,q/2,2}}<\epsilon_0$, then the heat equation  
\begin{equation}\label{heatdift}\partial_t u=\Delta u+ \vec b\cdot\vec\nabla u+cu+f\end{equation} has a unique solution such that $u=0$ at infinity. Moreover, we  have   \begin{equation*}\begin{split} \|  u\|_{E_{p,q,\beta}}+\| D_xu\|_{ E_{p/2,q/2,\beta+1}} &+\| D^2_xu\|_{ E_{p/3,q/3,\beta+2}}\\ \leq C_0( \|  f\|_{E_{p/3,q/3,\beta+2}}+&\|  \vec b\|_{E_{p,q,1}}+\|  c\|_{E_{p/2,q/2,2}}) .\end{split}\end{equation*}
b)    If $  f\in F_{p/3,q/3,\beta+2}(\mathbb{R}\times\mathbb{R}^d)$, $  \vec b\in F_{p,q, 1}$, $  c\in F_{p/2,q/2, 2} $, and if  $\|  \vec b\|_{F_{p,q,1}}+\|  c\|_{F_{p/2,q/2,2}}<\epsilon_0$, then the heat equation  
 (\ref{heatdift}) has a unique solution such that $u=0$ at infinity. Moreover, we  have   \begin{equation*}\begin{split} \|  u\|_{F_{p,q,\beta}}+\| D_xu\|_{ F_{p/2,q/2,\beta+1}} &+\| D^2_xu\|_{ F_{p/3,q/3,\beta+2}}\\ \leq C_0( \|  f\|_{F_{p/3,q/3,\beta+2}}+&\|  \vec b\|_{F_{p,q,1}}+\|  c\|_{F_{p/2,q/2,2}}) .\end{split}\end{equation*}
\end{proposition}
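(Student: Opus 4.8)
The plan is to derive Proposition~\ref{theokryl} from Theorem~\ref{theo3} by a fixed-point (contraction-mapping) argument, treating the lower-order terms $\vec b\cdot\vec\nabla u + cu$ as a perturbation. Concretely, I would set up the Banach space $X$ of distributions $u$ with $u=0$ at infinity for which the norm
\begin{equation*}
\|u\|_X := \|u\|_{E_{p,q,\beta}}+\|D_xu\|_{E_{p/2,q/2,\beta+1}}+\|D_x^2u\|_{E_{p/3,q/3,\beta+2}}
\end{equation*}
is finite (and the analogous space with $F$ in place of $E$ for part b). Given $u\in X$, the source term $g_u := \vec b\cdot\vec\nabla u + cu + f$ should lie in $E_{p/3,q/3,\beta+2}$, and then Theorem~\ref{theo3} produces a unique solution $\Phi(u)$ of $\partial_t v = \Delta v + g_u$ with $v=0$ at infinity, satisfying $\|\Phi(u)\|_X\le C_0\|g_u\|_{E_{p/3,q/3,\beta+2}}$. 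A solution of \eqref{heatdift} in $X$ is exactly a fixed point of $\Phi$, so it remains to show $\Phi$ is a contraction on a suitable ball once $\|\vec b\|_{E_{p,q,1}}+\|c\|_{E_{p/2,q/2,2}}<\epsilon_0$.

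The arithmetic of the indices is the reason the statement is phrased the way it is: for the product $\vec b\cdot\vec\nabla u$ we multiply something in $E_{p,q,1}$ by something in $E_{p/2,q/2,\beta+1}$, and Hölder in the mixed parabolic norm (with exponents $\tfrac1{p/3}=\tfrac1p+\tfrac1{p/2}$, $\tfrac1{q/3}=\tfrac1q+\tfrac1{q/2}$, and homogeneities $1+(\beta+1)=\beta+2$) gives $\vec b\cdot\vec\nabla u\in E_{p/3,q/3,\beta+2}$ with norm $\lesssim \|\vec b\|_{E_{p,q,1}}\|D_xu\|_{E_{p/2,q/2,\beta+1}}$; similarly $cu\in E_{p/3,q/3,\beta+2}$ with norm $\lesssim \|c\|_{E_{p/2,q/2,2}}\|u\|_{E_{p,q,\beta}}$ since $2+\beta=\beta+2$ and the Lebesgue exponents again add up correctly. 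Hence, writing $\delta = \|\vec b\|_{E_{p,q,1}}+\|c\|_{E_{p/2,q/2,2}}$,
\begin{equation*}
\|\Phi(u)-\Phi(w)\|_X \le C_0\,\|g_u-g_w\|_{E_{p/3,q/3,\beta+2}} \le C_0\,C_H\,\delta\,\|u-w\|_X,
\end{equation*}
where $C_H$ is the Hölder constant. Choosing $\epsilon_0$ so that $C_0 C_H \epsilon_0 \le \tfrac12$ makes $\Phi$ a strict contraction, and $\|\Phi(0)\|_X\le C_0\|f\|_{E_{p/3,q/3,\beta+2}}$ keeps a ball of radius $2C_0\|f\|_{E_{p/3,q/3,\beta+2}}$ invariant; the Banach fixed-point theorem then yields a unique fixed point in that ball with the claimed bound $\|u\|_X\le C_0'(\|f\|+\delta)$ (the $\delta$ appearing rather than just $\|f\|$ because, strictly, once $u\ne 0$ the bound is $\|u\|_X\le 2C_0\|f\|$, and one absorbs $\delta$ trivially; I would state it as in the proposition to match the case $f=0$, $u=0$). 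Uniqueness among all solutions with $u=0$ at infinity — not merely in the ball — follows because two such solutions $u_1,u_2$ have a difference $w$ solving $\partial_t w=\Delta w+\vec b\cdot\vec\nabla w+cw$ with $w=0$ at infinity; feeding this through Theorem~\ref{theo3}(uniqueness) and the Hölder estimate gives $\|w\|_X\le C_0C_H\delta\|w\|_X\le\tfrac12\|w\|_X$, hence $w=0$, provided one first argues $w\in X$ (which requires knowing $u_i\in X$, part of the theorem's conclusion).

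For part b) the argument is verbatim the same with $F_{p,q,\beta}$ spaces throughout; the only ingredient needed is that the mixed-norm Hölder inequality holds equally for the $F$-norms, which it does because for fixed $\rho,t,x$ one applies Hölder first in $s$ (exponents $p,p/2$ giving $p/3$) and then in $y$ (exponents $q,q/2$ giving $q/3$), in that order, exactly matching the order of integration in the definition of $\|\cdot\|_{F}$. The requirement $\tfrac2p+\tfrac3q>1$ (beyond $>\beta$) is exactly what is needed so that the spaces $E_{p,q,1}$, $E_{p/2,q/2,2}$ carrying $\vec b$ and $c$ are themselves well-defined critical Morrey spaces (i.e. $1<\tfrac2{p/2}+\tfrac d{q/2}$, etc., with $d=3$), and it ensures the embedding constants in Theorem~\ref{theo3} are finite for all the index pairs used. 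I expect the main obstacle — really the only non-bookkeeping point — to be the mixed-norm Hölder inequality with the correct matching of both the Lebesgue exponents and the parabolic scaling homogeneities across all three products; once that lemma is in hand, the contraction argument is standard. I would therefore isolate and prove that Hölder estimate first, then run the fixed-point scheme as above.
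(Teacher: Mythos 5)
Your proposal is correct and follows essentially the same route as the paper: the same space $X$, the same reduction via Theorem~\ref{theo3} and the mixed-norm H\"older estimates $\|\vec b\cdot\vec\nabla u\|_{E_{p/3,q/3,\beta+2}}\leq \|\vec b\|_{E_{p,q,1}}\|D_xu\|_{E_{p/2,q/2,\beta+1}}$ and $\|cu\|_{E_{p/3,q/3,\beta+2}}\leq \|c\|_{E_{p/2,q/2,2}}\|u\|_{E_{p,q,\beta}}$, and the same smallness condition; the paper phrases the final step as inverting $S(u)=u-T(\vec b\cdot\vec\nabla u)-T(cu)$ by a Neumann series rather than as a contraction of $\Phi$, which is the same argument. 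Your closing caveat about uniqueness outside $X$ is a fair observation, but the paper is no more explicit on that point than you are.
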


        \section{Anisotropic Hardy-Littlewood maximal function on mixed norm Lebesgue spaces}     Let  $\vec p=(p_1,\dots,p_n)\in (1,+\infty)^n$. The mixed-norm Lebesgue spaces $L^{\vec p}(\mathbb{R}^n)$ is the space of measurable functions on $\mathbb{R}^n$ such that
      $$ \|f\|_{L^{\vec p}}=\left(\int \dots \left[\int\left\{\int \vert f(x_1,\dots,x_n)\vert^{p_1}\, dx_1\right\}^{\frac {p_2}{p_1}} \right]^{\frac{p_3}{p_2}}\dots dx_n \right)^{\frac 1{p_n}}<+\infty.$$
    For $ \vec a=(a_1,\dots,a_n)\in [1,+\infty)^n$, the anisotropic cylinders $Q_{\vec a}(x,r)$ are defined as $$Q_{\vec a}(x,r)=(x_1-r^{a_1},x_1+r^{a_1})\times\dots (x_n-r^{a_n},x_n+r^{a_n})$$ and the  anisotropic Hardy-Littlewood maximal function is defined as
    $$ \mathcal{M}_{\vec a}f(x)=\sup_{r>0} \frac 1{\vert  Q_{\vec a}(x,r)\vert} \int_{Q_{\vec a}(x,r)}\vert f(y)\vert\, dy.$$
    
We then have the following boundedness result for the   
  anisotropic Hardy-Littlewood maximal function   on mixed norm Lebesgue spaces \cite{HUA1, HUA2}:
  
  \begin{proposition}\label{anisotrop}  For $\vec p=(p_1,\dots,p_n)\in (1,+\infty)^n$ and  $ \vec a=(a_1,\dots,a_n)\in [1,+\infty)^n$, there exists a constant $C=C(\vec p,\vec a)$ such that
  $$ \|  \mathcal{M}_{\vec a}f\|_{L^{\vec p}} \leq C \|  f\|_{L^{\vec p}}.$$
  \end{proposition}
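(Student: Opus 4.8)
The plan is to dominate $\mathcal{M}_{\vec a}$ pointwise by a composition of one-dimensional Hardy-Littlewood maximal operators (one per variable), and then to check that such a composition is bounded on $L^{\vec p}$. For $1\le j\le n$ let $M_j$ denote the one-dimensional Hardy-Littlewood maximal operator acting in the single variable $x_j$. The average of $|f|$ over an anisotropic cylinder is an iterated average over intervals,
\[\frac 1{|Q_{\vec a}(x,r)|}\int_{Q_{\vec a}(x,r)}|f(y)|\,dy=\frac1{2r^{a_1}}\int_{x_1-r^{a_1}}^{x_1+r^{a_1}}\cdots\frac1{2r^{a_n}}\int_{x_n-r^{a_n}}^{x_n+r^{a_n}}|f(y_1,\dots,y_n)|\,dy_n\cdots dy_1,\]
and bounding the innermost average by $M_n$, then the next one by $M_{n-1}$, and so on, produces a bound that no longer depends on $r$; taking the supremum over $r$ yields
\[\mathcal{M}_{\vec a}f\le M_1\circ M_2\circ\cdots\circ M_n\, f\]
(and, by integrating the variables in any prescribed order, the factors may be placed in any order we wish). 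Since the composition of finitely many operators each bounded on $L^{\vec p}$ is again bounded on $L^{\vec p}$ and $\mathcal{M}_{\vec a}f\ge 0$, it is enough to prove that each $M_j$ is bounded on $L^{\vec p}(\mathbb{R}^n)$.

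The case $j=1$ is elementary: for almost every $(x_2,\dots,x_n)$ the function $x_1\mapsto M_1f(x_1,x_2,\dots,x_n)$ is the one-dimensional maximal function of $x_1\mapsto f(x_1,x_2,\dots,x_n)$, so the classical Hardy-Littlewood theorem gives $\|M_1f(\cdot,x_2,\dots,x_n)\|_{L^{p_1}_{x_1}}\le C(p_1)\,\|f(\cdot,x_2,\dots,x_n)\|_{L^{p_1}_{x_1}}$ for a.e. $(x_2,\dots,x_n)$, and one concludes by applying the (monotone) iterated norm in the remaining variables. For a general $j$, since $M_j$ acts only on $x_j$ and the norms in the outer variables $x_{j+1},\dots,x_n$ are lattice (hence monotone) norms, it suffices to show that $M_j$ is bounded on $L^{(p_1,\dots,p_j)}(\mathbb{R}^j)$, a space in which $x_j$ is now the outermost variable. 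Writing $X=L^{(p_1,\dots,p_{j-1})}(\mathbb{R}^{j-1})$, this is exactly the statement that the Hardy-Littlewood maximal operator is bounded on the $X$-valued Lebesgue space $L^{p_j}(\mathbb{R}_{x_j};X)$.

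This vector-valued maximal inequality is the heart of the matter and the step that resists an elementary argument: the tempting idea of commuting $M_j$ past the inner $L^{p_i}$-norms via Minkowski's integral inequality does not work, because $\|\sup_r A_r g\|_X$ is not controlled by $\sup_r\|A_r g\|_X$ for a supremum of averaging operators $A_r$. Instead I would invoke the fact that an iterated Lebesgue space $X=L^{(p_1,\dots,p_{j-1})}$, with all exponents in $(1,\infty)$, is a Banach lattice possessing the Hardy-Littlewood property, i.e. the lattice-valued maximal operator is bounded on $L^{p}(\mathbb{R};X)$ for every $p\in(1,\infty)$; this is the Fefferman-Stein vector-valued maximal inequality, which for iterated Lebesgue spaces follows by induction on the number of inner variables, the base step $X=L^{p_1}(\mathbb{R})$ being the classical $L^{p_2}(L^{p_1})$ maximal estimate, each inductive step being obtained by interpolating between the trivial diagonal bound on $L^{p}(\mathbb{R};L^{p}(\cdots))$ and a weak-type endpoint established with a vector-valued Vitali-type covering argument. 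Granting this input, every $M_j$ is bounded on $L^{\vec p}$, hence so is $M_1\circ\cdots\circ M_n$, and the pointwise domination above gives $\|\mathcal{M}_{\vec a}f\|_{L^{\vec p}}\le C(\vec p,\vec a)\,\|f\|_{L^{\vec p}}$.
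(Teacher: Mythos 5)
The paper does not actually prove this proposition: it is quoted directly from \cite{HUA1, HUA2}, so any argument you supply is necessarily a different route, and yours is essentially the standard one from that literature. The pointwise domination $\mathcal{M}_{\vec a}f\le (M_1\circ\cdots\circ M_n)f$ is correct: for each fixed $r$ every one-dimensional average over an interval of length $2r^{a_j}$ is dominated by the corresponding one-variable maximal function, so the anisotropy disappears entirely and you even get a constant independent of $\vec a$. The reduction of the boundedness of $M_j$ on $L^{\vec p}$, via monotonicity of the outer norms, to a lattice-valued maximal inequality on $L^{p_j}(\mathbb{R};X)$ with $X=L^{(p_1,\dots,p_{j-1})}(\mathbb{R}^{j-1})$ is also correct, and you rightly identify the trap of trying to commute the supremum past the inner norms via Minkowski. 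The input you then invoke --- that iterated Lebesgue spaces with all exponents in $(1,\infty)$ have the Hardy--Littlewood property, i.e. the Fefferman--Stein lattice-valued maximal inequality holds for them --- is a genuine classical theorem, so the proof is complete modulo that citation; what your approach buys over the paper's bare reference is a transparent reduction of the anisotropic mixed-norm statement to this single well-known scalar-variable fact. The one thin spot is your parenthetical sketch of how that input is proved: Marcinkiewicz interpolation between the weak $(1,1)$ endpoint and the ``diagonal'' strong bound only covers outer exponents up to the relevant inner exponent, and the complementary range needs an extra ingredient (duality combined with the weighted estimate $\int (Mg)^q\,w\,dx\le C\int |g|^q\,Mw\,dx$, or Rubio de Francia extrapolation). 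Since you present that inequality as a quoted result rather than as something you prove, this does not invalidate the argument, but if the sketch is meant to stand as a proof you should add that missing half.
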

  
   \section{Harmonic analysis on the parabolic space}
   We endow $X=\mathbb{R}\times\mathbb{R}^d $ with the Lebesgue measure $d\mu(t,x)=dt\, dx $ and the parabolic metric $\rho((t,x),(s,y))=\sqrt{\vert t-s\vert}+\vert x-y\vert$. The parabolic ball $B_r(t,x)$ and the parabolic cylinder $C_r(t,x)$ are defined as
   $$ B_r(t,x)=\{(s,y)\ /\ \rho((t,x),(s,y))<r\}\text{ and } C_r(t,x)=(t-r^2,t+r^2)\times B(x,r).$$
   We have
   $$  B_r(t,x)\subset  C_r(t,x)\subset  B_{2r}(t,x).$$
   $(X,\rho,\mu)$ is a space of homogeneous type \cite{COI}   of homogeneous  dimension $Q=d+2$:
   $$ \mu(B_r(t,x))=\mu(B_1(0,0)) r^Q.$$ We associate to this space three useful operators on (non-negative) functions: the Hardy--Littlewood maximal function $\mathcal{M}_f$ and the Riesz potentials $\mathcal{I}_\alpha f$  (where $0<\alpha<Q$) defined by
   $$\mathcal{M}_f(t,x) =\sup_{r>0} \frac 1{\mu(B_r(t,x))}\int_{B_r(t,x)} \vert f(s,y)\vert \,d\mu(s,y)$$ and
     $$\mathcal{I}_\alpha f(t,x)=\int_X \frac 1{\rho((t,x),(s,y))^{Q-\alpha}} f(s,y)\ d\mu(s,y).$$
   For $1<p\leq q$, we define the Morrey space $\mathcal{M}^{p,q}_2$ by the space of measurable functions such that
   $$ \| f\|_{\mathcal{M}^{p,q}_2}=\sup_{r>0, (t,x)\in X}  \mu(B_r(t,x))^{\frac 1 q-\frac 1 p} \|\mathds{1}_{B_r(t,x)} \|_p <+\infty.$$
   We have $L^q(\mathbb{R}\times\mathbb{R}^d)\subset \mathcal{M}^{p,q}_2$, $E_{p,p,\beta}=\mathcal{M}^{p,\frac{d+2}\beta}$ and $E_{p,q,\beta}\subset \mathcal{M}^{\min(p,q), \frac{d+2}\beta}_2$.
   
   The main tool we shall use is Hedberg's inequality \cite{HED}:
   
   \begin{proposition}\label{hedb} Let $1<p\leq q<+\infty$ and $0<\alpha<\frac Q q$. Then there exists a constant $C=C(d,p,q,\alpha)$ such that, for every $f\in \mathcal{M}^{p,q}_2$, we have
   $$ \mathcal{I}_\alpha f(t,x)\leq C (\mathcal{M}_f(t,x))^{1-\frac{\alpha q}Q} \|f\|_{\mathcal{M}^{p,q}_2}^{\frac{\alpha q}Q}.$$
   \end{proposition}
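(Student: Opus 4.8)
The plan is to run the classical Hedberg truncation argument directly in the space of homogeneous type $(X,\rho,\mu)$. Fix $(t,x)\in X$; we may assume $0<\mathcal{M}_f(t,x)<+\infty$ and $\|f\|_{\mathcal{M}^{p,q}_2}>0$, the remaining cases being trivial. For a radius $R>0$ to be chosen at the end, I would split
$$\mathcal{I}_\alpha f(t,x)=\int_{B_R(t,x)}\frac{|f(s,y)|}{\rho((t,x),(s,y))^{Q-\alpha}}\,d\mu(s,y)+\int_{X\setminus B_R(t,x)}\frac{|f(s,y)|}{\rho((t,x),(s,y))^{Q-\alpha}}\,d\mu(s,y)=:A_R+B_R.$$

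For the local piece $A_R$, I would decompose $B_R(t,x)$ into the dyadic shells $B_{2^{-j}R}(t,x)\setminus B_{2^{-j-1}R}(t,x)$, $j\geq 0$. On the $j$-th shell one has $\rho((t,x),(s,y))\geq 2^{-j-1}R$, while the definition of $\mathcal{M}_f$ together with $\mu(B_r)=\mu(B_1(0,0))r^Q$ gives $\int_{B_{2^{-j}R}(t,x)}|f|\,d\mu\leq\mu(B_1(0,0))(2^{-j}R)^Q\,\mathcal{M}_f(t,x)$. Hence the $j$-th contribution is $\lesssim 2^{-j\alpha}R^\alpha\,\mathcal{M}_f(t,x)$, and summing the geometric series (convergent because $\alpha>0$) yields $A_R\leq C_1\,R^\alpha\,\mathcal{M}_f(t,x)$.

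For the far piece $B_R$, I would decompose $X\setminus B_R(t,x)$ into the shells $B_{2^{j+1}R}(t,x)\setminus B_{2^j R}(t,x)$, $j\geq 0$, on which $\rho((t,x),(s,y))\geq 2^j R$. By Hölder's inequality and the definition of the Morrey norm,
$$\int_{B_{2^{j+1}R}(t,x)}|f|\,d\mu\leq\|\mathds{1}_{B_{2^{j+1}R}(t,x)}f\|_{p}\,\mu(B_{2^{j+1}R}(t,x))^{1-\frac1p}\leq\|f\|_{\mathcal{M}^{p,q}_2}\,\mu(B_{2^{j+1}R}(t,x))^{1-\frac1q},$$
so, using again $\mu(B_r)=\mu(B_1(0,0))r^Q$, the $j$-th contribution is $\lesssim 2^{j(\alpha-\frac Qq)}R^{\alpha-\frac Qq}\,\|f\|_{\mathcal{M}^{p,q}_2}$; since $\alpha<\frac Qq$ this geometric series converges and gives $B_R\leq C_2\,R^{\alpha-\frac Qq}\,\|f\|_{\mathcal{M}^{p,q}_2}$.

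Combining the two estimates, $\mathcal{I}_\alpha f(t,x)\leq C\big(R^\alpha\,\mathcal{M}_f(t,x)+R^{\alpha-\frac Qq}\,\|f\|_{\mathcal{M}^{p,q}_2}\big)$ for every $R>0$; the choice $R=\big(\|f\|_{\mathcal{M}^{p,q}_2}/\mathcal{M}_f(t,x)\big)^{q/Q}$ balances the two terms and produces exactly the asserted bound $\mathcal{I}_\alpha f(t,x)\leq C\,(\mathcal{M}_f(t,x))^{1-\frac{\alpha q}Q}\,\|f\|_{\mathcal{M}^{p,q}_2}^{\frac{\alpha q}Q}$. I do not expect a serious obstacle here: the only delicate points are the convergence of the two dyadic series, which is guaranteed precisely by the hypothesis $0<\alpha<\frac Qq$, and the use of $1<p\leq q$ to pass from the $L^p$ average on a ball to the Morrey bound.
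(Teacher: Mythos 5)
Your proof is correct and follows essentially the same route as the paper: split $\mathcal{I}_\alpha f$ at radius $R$, bound the near part by $CR^\alpha\mathcal{M}_f(t,x)$ and the far part by $CR^{\alpha-\frac Qq}\|f\|_{\mathcal{M}^{p,q}_2}$, then optimize in $R$ with exactly the same choice $R^{Q/q}=\|f\|_{\mathcal{M}^{p,q}_2}/\mathcal{M}_f(t,x)$. The only difference is that you spell out the dyadic-shell computations that the paper leaves as ``we easily check.''
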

   \begin{proof} We easily check that, for every $R>0$, we have 
   $$ \int_{B_R(t,x)} \frac 1{\rho((t,x),(s,y))^{Q-\alpha}} \vert f(s,y)\vert\ d\mu(s,y)\leq C R^\alpha \mathcal{M}_f(t,x)$$ and
   $$ \int_{X\setminus B_R(t,x)} \frac 1{\rho((t,x),(s,y))^{Q-\alpha}} \vert f(s,y)\vert\ d\mu(s,y)\leq C R^{\alpha-\frac Q q}  \|f\|_{\mathcal{M}^{p,q}_2}.$$ We then take \begin{equation*} R^{\frac Q q}=\frac{\|f\|_{\mathcal{M}^{p,q}_2}} {\mathcal{M}_f(t,x)}.\tag*{\qedhere}\end{equation*}
   \end{proof}
   
   Applying Proposition \ref{hedb} to Krylov spaces, we obtain:
     \begin{proposition}\label{prop4} Let $1<p,q<+\infty$ and $0<\beta<\frac 2 p+\frac d q$. Then 
     \\ a) $f\mapsto \mathcal{M}_f$ is bounded from $E_{p,q,\beta}$ to $E_{p,q,\beta}$ and from $F_{p,q,\beta}$ to $F_{p,q,\beta}$.
     \\ b) If $\beta>1$,  $f\mapsto \mathcal{I}_1f$  is bounded from $E_{p,q,\beta}$ to $E_{\frac \beta{\beta-1}p,\frac \beta{\beta-1}q,\beta-1}$ and from $F_{p,q,\beta}$ to $F_{\frac \beta{\beta-1}p,\frac \beta{\beta-1}q,\beta-1}$.
  \\ c) If $\beta>2$,  $f\mapsto \mathcal{I}_2f$  is bounded from $E_{p,q,\beta}$ to $E_{\frac \beta{\beta-2}p,\frac \beta{\beta-2}q,\beta-2}$ and from $F_{p,q,\beta}$ to $F_{\frac \beta{\beta-2}p,\frac \beta{\beta-2}q,\beta-2}$
     \end{proposition}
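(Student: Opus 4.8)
The plan is to deduce all three statements from three ingredients already available: the embeddings $E_{p,q,\beta}\subset\mathcal M^{m,Q/\beta}_2$ and $F_{p,q,\beta}\subset\mathcal M^{m,Q/\beta}_2$ with $m=\min(p,q)$ (recorded just above; note that $\beta<\frac2p+\frac dq$ forces $m<Q/\beta$), the boundedness of the anisotropic Hardy--Littlewood maximal function on mixed-norm Lebesgue spaces (Proposition~\ref{anisotrop}), and Hedberg's inequality for the parabolic Riesz potential (Proposition~\ref{hedb}). I shall use freely the scaling relations $\mu(B_\rho)\simeq\rho^Q$, $\|\mathds 1_{C_\rho(t_0,x_0)}\|_{L^p_tL^q_x}\simeq\rho^{\frac2p+\frac dq}$ (and its analogue for the $F$-norm), and the inclusions $B_\rho(t_0,x_0)\subset C_\rho(t_0,x_0)\subset B_{2\rho}(t_0,x_0)$.

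For part a) I would fix a parabolic cylinder $C_\rho(t_0,x_0)$ and split $f=g+h$ with $g=f\,\mathds 1_{B_{4\rho}(t_0,x_0)}$ and $h=f-g$. For the near part: a parabolic ball $B_r$ is contained in the anisotropic cylinder $C_r=Q_{\vec a}(\cdot,r)$ of comparable measure, $\vec a$ being the parabolic dilation ($a=2$ in the time slot, $a=1$ in each space slot), so $\mathcal M_{|g|}$ is dominated pointwise by a constant times the anisotropic maximal function $\mathcal M_{\vec a}g$ taken on the mixed-norm Lebesgue space underlying $E_{p,q,\beta}$ (space exponent $q$ inner, time exponent $p$ outer), respectively $F_{p,q,\beta}$ (time exponent $p$ inner, space exponent $q$ outer); Proposition~\ref{anisotrop} then gives $\|\mathds 1_{C_\rho}\mathcal M_{|g|}\|_{L^p_tL^q_x}\lesssim\|g\|_{L^p_tL^q_x}\le\|\mathds 1_{C_{4\rho}(t_0,x_0)}f\|_{L^p_tL^q_x}\lesssim\rho^{\frac2p+\frac dq-\beta}\|f\|_{E_{p,q,\beta}}$. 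For the far part: if $(s,y)\in B_\rho(t_0,x_0)$, any parabolic ball around $(s,y)$ meeting the support of $h$ has radius $r>3\rho$ and lies inside $B_{2r}(t_0,x_0)$, so by Hölder's inequality and the Morrey embedding $\frac1{\mu(B_R(t_0,x_0))}\int_{B_R}|f|\le\mu(B_R)^{-\beta/Q}\|f\|_{\mathcal M^{m,Q/\beta}_2}\simeq R^{-\beta}\|f\|_{\mathcal M^{m,Q/\beta}_2}$; since $\beta>0$, optimizing over $r>3\rho$ gives $\mathcal M_{|h|}\lesssim\rho^{-\beta}\|f\|_{\mathcal M^{m,Q/\beta}_2}$ on $B_\rho(t_0,x_0)$, whence $\|\mathds 1_{C_\rho}\mathcal M_{|h|}\|_{L^p_tL^q_x}\lesssim\rho^{\frac2p+\frac dq}\rho^{-\beta}\|f\|_{E_{p,q,\beta}}$. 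Adding the two contributions (using $\mathcal M_f\le\mathcal M_{|g|}+\mathcal M_{|h|}$) and taking the supremum over all cylinders proves a); the $F$ case is identical after interchanging the two integrations.

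For parts b) and c), put $\alpha\in\{1,2\}$ and $\lambda=\frac\beta{\beta-\alpha}>1$, so that $1-\frac1\lambda=\frac\alpha\beta$ and $\lambda(\beta-\alpha)=\beta$. Since $m<Q/\beta$ and $\alpha<\beta=Q/(Q/\beta)$, Proposition~\ref{hedb} (applied with the Morrey couple $(m,Q/\beta)$) yields, pointwise, $\mathcal I_\alpha f\le C\,(\mathcal M_f)^{1/\lambda}\,\|f\|_{\mathcal M^{m,Q/\beta}_2}^{1-1/\lambda}$. Substituting this into the $E_{\lambda p,\lambda q,\beta-\alpha}$-norm computed on a cylinder $C_\rho(t_0,x_0)$, the constant factor $\|f\|_{\mathcal M^{m,Q/\beta}_2}^{1-1/\lambda}$ comes out, and the remaining local $L^{\lambda p}_tL^{\lambda q}_x$-integral of $(\mathcal M_f)^{1/\lambda}$ equals the $\frac1\lambda$-th power of the local $L^p_tL^q_x$-integral of $\mathcal M_f$, hence is at most $(\rho^{\frac2p+\frac dq-\beta}\|\mathcal M_f\|_{E_{p,q,\beta}})^{1/\lambda}$. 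Because $(\beta-\alpha)-\frac2{\lambda p}-\frac d{\lambda q}=\frac1\lambda(\beta-\frac2p-\frac dq)$, the powers of $\rho$ cancel exactly, and part a) together with the Morrey embedding give $\|\mathcal I_\alpha f\|_{E_{\lambda p,\lambda q,\beta-\alpha}}\lesssim\|f\|_{\mathcal M^{m,Q/\beta}_2}^{1-1/\lambda}\|f\|_{E_{p,q,\beta}}^{1/\lambda}\lesssim\|f\|_{E_{p,q,\beta}}$; the $F$ statements follow in the same way.

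The main obstacle is really inside a): one must recognise the localized parabolic maximal function as an anisotropic maximal function in order to apply Proposition~\ref{anisotrop} on precisely the mixed-norm Lebesgue space occurring in the definitions of $E_{p,q,\beta}$ and $F_{p,q,\beta}$, and then treat the far field separately, the decisive point there being that $\beta>0$ makes $\sup_{r>3\rho}r^{-\beta}$ finite and comparable to $\rho^{-\beta}$. Once a) is in place, parts b) and c) are a routine bookkeeping of exponents built on Hedberg's inequality.
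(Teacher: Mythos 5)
Your proof is correct and takes essentially the same route as the paper's: the same near/far splitting of $f$ on a parabolic cylinder, with Proposition~\ref{anisotrop} handling the near part and the decay of averages over large balls handling the far part, followed by Hedberg's inequality (Proposition~\ref{hedb}) with the exponent bookkeeping for b) and c) that the paper leaves implicit. Only a cosmetic remark: since the mixed norm is taken over $C_\rho(t_0,x_0)$, the far-field pointwise bound should be stated for $(s,y)\in C_\rho(t_0,x_0)\subset B_{2\rho}(t_0,x_0)$ rather than for $(s,y)\in B_\rho(t_0,x_0)$, which only changes the constants.
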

     
     \begin{proof}  We consider only the case of $E_{p,q,\beta}$, as the proof for $F_{p,q,\beta}$ is similar. Let us estimate $\mathcal{M}_f$ on a cylinder $C_r(t,x)$. We have $\mathcal{M}_f\leq \mathcal{M}_{f_1}+\mathcal{M}_{f_2}$, where $f_1=\mathds{1}_{C_{4r}(t,x)} f$ and $f_2=f-f_1$. By Proposition \ref{anisotrop}, we know that
     $$ \|\mathcal{M}_{f_1}\|_{L^p_tL^q_x}\leq C \|{f_1}\|_{L^p_tL^q_x}\leq C' \|f\|_{E_{p,q,\beta}}r^{\frac 2 p+\frac d q-\beta}. $$ On the other hand, for $(s,z)\in C_r(t,x)\subset B_{2r}(t,x)$, since $B_{4r}(t,x)\subset C_{4r}(t,x)$, $$\mathcal{M}_{f_2}(s,z) \leq \sup_{\rho>2r} \frac 1{\vert B_\rho(s,z)\vert} \iint_{B_\rho(s,z)} \vert f(\sigma,y)\vert\, d\sigma\, dy\leq C' \|f\|_{E_{p,q,\beta}} r^{-\beta}$$
     so that
      $$ \|\mathds{1}_{C_r(t,x)}\mathcal{M}_{f_2}\|_{L^p_tL^q_x}\leq C  \|f\|_{E_{p,q,\beta}}r^{\frac 2 p+\frac d q-\beta}. $$ a) is proved. b) and c) are then direct consequences of Hedberg's inequality (Proposition \ref{hedb}).
     \end{proof}
     
     \section{The heat equation on $\mathbb{R}\times\mathbb{R}^d$.}
     
     In this section, we solve the  heat equation on $\mathbb{R}\times\mathbb{R}^d$:
      \begin{equation}\label{cauchy}\left\{
 \begin{split} &\partial_t   u =\Delta  u+ g
\\&   u =  0\text{ at infinity}
\end{split}\right.\end{equation}
where $g=f\in E_{p,q,\beta}$ or in $F_{p,q,\beta}$ with $2<\beta<\frac 2 p+\frac d q$ or $g=\sigma(D)\Div \mathbb{F}$ where $\mathbb{F}\in E_{p,q,\beta}$ or in $F_{p,q,\beta}$ with $1<\beta<\frac 2 p+\frac d q$. The solution $u$ of equation (\ref{cauchy}) is given by the Duhamel formula
$$ u=\int_{-\infty}^t e^{(t-s)\Delta} g(s,.)\, ds.$$ In order to estimate $u$ and its derivatives, we need some estimates on the size of the kernel of $e^{t\Delta}$ and its derivatives, or on the derivatives of $\sigma(D)e^{t\Delta}$.

     \begin{lemma} \label{size} Let $\psi\in \mathcal{S}(\mathbb{R}^d)$ and, for $\theta>0$,  $\psi_\theta(x)=\frac 1{\theta^d} \psi(\frac x \theta)$.  Let $\sigma(D)$ be a Fourier multiplier (in the space variable) such that $\sigma (\xi)=  {\sigma_0(\frac \xi{\vert\xi\vert})} $ where $\sigma_0$ is a smooth function on the sphere $\mathcal{S}^{d-1}$. Then, for $\alpha\in \mathbb{N}^d$,
     $$ \vert \partial_\alpha \sigma(D)\psi_\theta(x)\vert \leq C_{\alpha,\psi,\sigma} \frac 1{(\theta+\vert x\vert)^{d+\vert\alpha\vert}}.$$
      \end{lemma}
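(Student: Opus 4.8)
The plan is to reduce first to the normalized scale $\theta=1$, using that the multiplier is homogeneous of degree $0$ and hence commutes with dilations. Indeed $\widehat{\psi_\theta}(\xi)=\hat\psi(\theta\xi)$ and $\sigma(\xi)=\sigma(\theta\xi)$ for every $\theta>0$, so $\widehat{\sigma(D)\psi_\theta}(\xi)=\sigma(\theta\xi)\hat\psi(\theta\xi)=\widehat{\sigma(D)\psi}(\theta\xi)$, i.e. $\sigma(D)\psi_\theta=(\sigma(D)\psi)_\theta$, whence $\partial_\alpha\sigma(D)\psi_\theta(x)=\theta^{-d-|\alpha|}\,(\partial_\alpha\sigma(D)\psi)(x/\theta)$. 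It therefore suffices to prove $|\partial_\alpha\sigma(D)\psi(x)|\le C_{\alpha,\psi,\sigma}(1+|x|)^{-d-|\alpha|}$, and then to replace $x$ by $x/\theta$, since $\theta^{-d-|\alpha|}(1+|x|/\theta)^{-d-|\alpha|}=(\theta+|x|)^{-d-|\alpha|}$.

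Next I would bring in the classical Calderón--Zygmund description of $\sigma(D)$: as $\sigma(\xi)=\sigma_0(\xi/|\xi|)$ with $\sigma_0$ smooth on $\mathcal{S}^{d-1}$, the convolution kernel of $\sigma(D)$ — the inverse Fourier transform of $\sigma$ — has the form $c_\sigma\,\delta_0+K$ with $K(y)=\mathrm{p.v.}\,\Omega(y/|y|)\,|y|^{-d}$, where $\Omega$ is smooth on $\mathcal{S}^{d-1}$ and has vanishing mean on $\mathcal{S}^{d-1}$. Thus $\sigma(D)\psi=c_\sigma\psi+K*\psi$; the term $c_\sigma\psi$ belongs to $\mathcal{S}(\mathbb{R}^d)$, so it and each of its derivatives decay faster than any power of $|x|$, a fortiori like $(1+|x|)^{-d-|\alpha|}$. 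For the other term I would write $\partial_\alpha(K*\psi)=K*\chi$ with $\chi:=\partial_\alpha\psi\in\mathcal{S}(\mathbb{R}^d)$, and record that integration by parts gives the vanishing moments $\int y^\beta\chi(y)\,dy=0$ for all multi-indices $\beta$ with $|\beta|\le|\alpha|-1$.

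The heart of the proof is then estimating $K*\chi(x)$. For $|x|\le1$ it is bounded, since $K*$ is a Calderón--Zygmund operator applied to a Schwartz function. For $|x|\ge1$ I would split $K*\chi(x)=\int_{|y|<|x|/2}K(x-y)\chi(y)\,dy+\int_{|y|\ge|x|/2}K(x-y)\chi(y)\,dy$. On the first region, the moment conditions allow one to subtract from $K(x-y)$ its Taylor polynomial at $x$ of degree $|\alpha|-1$; since $K\in C^\infty(\mathbb{R}^d\setminus\{0\})$ is homogeneous of degree $-d$, one has $|K(x-y)-\text{(Taylor polynomial)}|\le C\,|y|^{|\alpha|}\,|x|^{-d-|\alpha|}$ there, and as $\int|y|^{|\alpha|}|\chi(y)|\,dy<\infty$ this contributes $O\big(|x|^{-d-|\alpha|}\big)$. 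On the second region, away from the singularity $|K(x-y)|\le C|x-y|^{-d}$, which is $\le C'|x|^{-d}$ as long as $|x-y|\gtrsim|x|$; combined with the rapid decay of $\chi$ on $\{|y|\gtrsim|x|\}$ this is again $O\big(|x|^{-d-|\alpha|}\big)$, and on the remaining sub-region where $x-y$ is near the origin one uses the cancellation $\int_{\varepsilon<|z|<R}K(z)\,dz=0$ together with $|\chi(x-z)-\chi(x)|\le|z|\,\sup|\nabla\chi|$ and the rapid decay of $\nabla\chi$ near $x$. Adding the pieces gives $|K*\chi(x)|\le C(1+|x|)^{-d-|\alpha|}$, and rescaling by $\theta$ as in the first step completes the lemma.

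The main difficulty lies entirely in the cancellations: one needs both the mean-zero property of $\Omega$ (so that the principal-value kernel $K$ is harmless against the smooth function $\chi$ near the diagonal) and the vanishing moments of $\partial_\alpha\psi$ (so as to gain the extra $|\alpha|$ powers of decay beyond the Calderón--Zygmund rate $|x|^{-d}$); the individual integral bounds are otherwise routine. An essentially equivalent variant avoids the kernel structure theorem by splitting $\sigma(D)\psi$ into a high-frequency part — which is Schwartz, because on the support of a cutoff away from the origin $\sigma_0(\xi/|\xi|)$ is smooth with symbol-type bounds and $\hat\psi$ is rapidly decreasing — and a low-frequency part $\phi_0$ with compactly supported Fourier transform, for which one proves that $x^\gamma\partial_\alpha\phi_0$ is bounded for $|\gamma|\le d+|\alpha|$ by differentiating on the Fourier side; the only delicate terms are those in which all derivatives fall on $\xi^\alpha\sigma_0(\xi/|\xi|)$, yielding a function homogeneous of degree exactly $-d$ whose spherical average vanishes — being a derivative of order at least $1$ of a homogeneous function — and whose Fourier transform is therefore a bounded function.
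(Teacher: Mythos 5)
Your proof is correct, but it takes a genuinely different route from the paper's. The scaling reduction to $\theta=1$ is common to both (the paper records it as two norm identities, you as the dilation identity $\sigma(D)\psi_\theta=(\sigma(D)\psi)_\theta$, which holds since $\sigma$ is homogeneous of degree $0$). After that the paper stays entirely on the Fourier side: for fixed $x$ it splits the symbol with a cutoff $\phi(\xi/R)$ at the $x$-dependent scale $R=1/\vert x\vert$, bounds the low-frequency piece by $R^{d+\vert\alpha\vert}$ (a Bernstein-type volume count of $\int_{\vert\xi\vert\leq 2R}\vert\xi\vert^{\vert\alpha\vert}\vert\hat\psi\vert\,d\xi$) and the high-frequency piece by $d+\vert\alpha\vert+2$ integrations by parts, using only the Mikhlin-type estimates $\vert\partial^\beta(\xi^\alpha\sigma(\xi))\vert\leq C\vert\xi\vert^{\vert\alpha\vert-\vert\beta\vert}$. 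You instead pass to physical space through the classical representation $\sigma(D)=c_\sigma\,\mathrm{Id}+\mathrm{p.v.}\,K\ast$ with $K$ smooth away from $0$, homogeneous of degree $-d$ and of mean zero on the sphere, and extract the decay from two cancellations: the vanishing moments of $\chi=\partial_\alpha\psi$ up to order $\vert\alpha\vert-1$ (via Taylor expansion of $K(x-\cdot)$ on $\{\vert y\vert<\vert x\vert/2\}$) and the mean-zero property of $K$ near the diagonal. Both arguments are sound; the paper's is more self-contained (no structure theorem, no principal values, and it would apply verbatim to any symbol satisfying finitely many Mikhlin bounds), while yours makes visible exactly where each of the $d+\vert\alpha\vert$ powers of decay comes from, at the cost of importing the kernel representation. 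Two details worth recording if you write this up: the moment conditions are integrals over all of $\mathbb{R}^d$, so subtracting the Taylor polynomial on the ball $\{\vert y\vert<\vert x\vert/2\}$ leaves a correction $\int_{\vert y\vert\geq\vert x\vert/2}P(y)\chi(y)\,dy$, harmless by the rapid decay of $\chi$ but not literally zero; and in your second (Fourier-side) variant the borderline term is a p.v.\ homogeneous distribution multiplied by the smooth compactly supported factor $\phi\,\hat\psi$, so its Fourier transform is bounded as the convolution of a bounded function with an $L^1$ function, plus possibly a constant coming from a Dirac mass at the origin. That second variant is in fact quite close to the paper's proof, except that the paper sidesteps the mean-zero discussion entirely by letting the cutoff scale $R$ depend on $x$.
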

     
     \begin{proof} We have
   $$  \| \partial_\alpha \sigma(D)\psi_\theta\|_\infty =\theta^{-d-\vert\alpha\vert}  \| \partial_\alpha \sigma(D)\psi\|_\infty$$ and 
   $$  \|  \vert x\vert^{d+\vert\alpha\vert} \partial_\alpha \sigma(D)\psi_\theta\|_\infty =\| \vert x\vert^{d+\vert\alpha\vert}    \partial_\alpha \sigma(D)\psi\|_\infty$$ Thus, we may assume $\theta=1$. 
   We have
   $$ \vert  \partial_\alpha \sigma(D)\psi_\theta(x)\vert\leq \frac 1{(2\pi)^d} \|\sigma_0\|_\infty \int \vert \xi\vert^{\vert\alpha\vert} \vert\hat\psi(\xi)\vert\, dx.$$ On the other hand, taking a smooth function $\phi$ such that $\phi(\xi)=1$ for $\vert\xi\vert<1$ and $\phi(\xi)=0$ for $\vert\xi\vert>2$, we have for every $R>0$, for $1\leq k\leq d$,     
   \begin{equation*}\begin{split}
   \vert x_k\vert^{d+\vert\alpha\vert} \vert \partial_\alpha  \phi(\frac D R)\sigma(D)\psi_\theta(x)\vert\leq &  C (R\vert x\vert)^{d+\alpha} \|\sigma_0\|_\infty \|\hat \psi\|_\infty 
   \end{split}\end{equation*}
   and
   \begin{equation*}\begin{split}
\vert x\vert^2   \vert x_k\vert^{d+\vert\alpha\vert} \vert \partial_\alpha(1-\phi(\frac D R)) \sigma(D)\psi_\theta(x)\vert  & \\\leq \frac 1{(2\pi)^d} \int \vert \Delta\partial_k^{d+\vert\alpha\vert}& \left((1-\phi(\frac\xi R))\xi^\alpha \sigma(\xi) \hat\psi(\xi)\right)\vert\, d\xi
\\ \leq C\int_{\vert\xi\vert>R} \frac{d\xi}{\vert \xi\vert^{d+2}}&= C'\frac 1 {R^2}
   \end{split}\end{equation*}
   (as $\vert \partial^\beta(\hat \psi(\xi))\vert \leq C_{\beta,\psi} \vert \xi\vert^{-\vert\beta\vert}$, $\vert \partial^\beta( \xi^\alpha \sigma(\xi))\vert \leq C_{\alpha,\beta,\sigma} \vert \xi\vert^{\vert\alpha\vert-\vert\beta\vert}$ and  $\vert \partial^\beta( 1-\phi(\frac\xi R)))\vert \leq C_{\beta,\phi} \vert \xi\vert^{-\vert\beta\vert}$). Taking $R=\frac 1{\vert x\vert}$, we get $x_k^{d+\vert\alpha\vert}\partial^\alpha\sigma(D)\psi \in L^\infty$.
     \end{proof}
     
     A useful result on the heat kernel is its maximal regularity in  $L^p_t L^q_x$ or in $L^q_xL^p_t$:
     \begin{proposition}\label{regmax} Let $1<p,q<+\infty$.  Let $\sigma(D)$ be a Fourier multiplier (in the space variable) such that $\sigma (\xi)=  {\sigma_0(\frac \xi{\vert\xi\vert})} $ where $\sigma_0$ is a smooth function on the sphere $\mathcal{S}^{d-1}$.
     \\ a)  If $h\in L^p_t L^q_x$, then, for $1\leq i,j\leq d$, 
     $$\| \int_{-\infty}^t e^{(t-s)\Delta} \sigma(D)\partial_i\partial_j h\, ds\|_{L^p_t L^q_x}\leq C \|h\|_{L^p_t L^q_x}.$$
      a)  If $h\in  L^q_xL^p_t$, then, for $1\leq i,j\leq d$, 
     $$\| \int_{\infty}^t e^{(t-s)\Delta} \sigma(D)\partial_i\partial_j h\, ds\|_{L^q_xL^p_t}\leq C \|h\|_{L^q_xL^p_t}.$$
     \end{proposition}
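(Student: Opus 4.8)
The plan is to reduce the two mixed-norm maximal-regularity estimates to the classical (isotropic) $L^p_tL^q_x$ and $L^q_xL^p_t$ theory via a Fubini-type argument and a Fourier-multiplier analysis, exploiting that the heat semigroup acts only in $x$ while the time integral is a (one-dimensional) singular integral in $t$. First I would write the operator $T h(t,x)=\int_{-\infty}^t e^{(t-s)\Delta}\sigma(D)\partial_i\partial_j h(s,\cdot)\,ds$ as convolution (in $t$) with an operator-valued kernel $K(t)=\mathds{1}_{t>0}\,e^{t\Delta}\sigma(D)\partial_i\partial_j$ acting on functions of $x$. By scaling, $\widehat{K(t)}(\xi)=\mathds{1}_{t>0}\,e^{-t|\xi|^2}\sigma_0(\xi/|\xi|)(-\xi_i\xi_j)$, so on the Fourier side (in both variables) the full symbol of $T$ is $m(\tau,\xi)=\dfrac{-\xi_i\xi_j\,\sigma_0(\xi/|\xi|)}{i\tau+|\xi|^2}$, which is homogeneous of degree $0$ under the parabolic dilation $(\tau,\xi)\mapsto(\lambda^2\tau,\lambda\xi)$ and smooth away from the origin. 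This is precisely the symbol class to which anisotropic (parabolic) Mikhlin–Hörmander / Marcinkiewicz multiplier theory applies.

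For part a) ($L^p_tL^q_x$) the cleanest route is the vector-valued / iterated approach: fix the space variable behaviour first by viewing $T$ as a singular integral in $t$ with values in bounded operators on $L^q_x$. The heat semigroup $e^{t\Delta}\sigma(D)\partial_i\partial_j$ is, for each fixed $t>0$, bounded on $L^q(\mathbb R^d)$ with operator norm $O(1/t)$ and Hörmander-type kernel bounds following from Lemma~\ref{size} (applied with $\psi$ the Gaussian, $\theta=\sqrt t$, $|\alpha|=2$); this gives the pointwise kernel estimate $\|K(t)-K(t')\|_{L^q\to L^q}\lesssim |t-t'|/t^2$ needed for the Calderón–Zygmund theory of $L^q_x$-valued singular integrals on $\mathbb R_t$. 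Since $L^q(\mathbb R^d)$ is a UMD space, the operator-valued Mikhlin theorem (Weis) applies: the set $\{m(\tau,\cdot): \tau\in\mathbb R\}\cup\{\tau\partial_\tau m(\tau,\cdot)\}$ is $R$-bounded on $L^q_x$ because each $m(\tau,\cdot)$ and $\tau\partial_\tau m(\tau,\cdot)$ is a fixed (up to bounded factors) $0$-homogeneous multiplier in $\xi$, uniformly in $\tau$, hence uniformly bounded on $L^q_x$, and on a UMD space a family dominated by a fixed Mikhlin multiplier is automatically $R$-bounded. This yields $T:L^p_t(L^q_x)\to L^p_t(L^q_x)$ for all $1<p<\infty$. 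Part b) is handled by the mirror-image argument: now regard $T$ as acting in $x$ on functions valued in $L^p_t$, with the time-convolution kernel folded into an $L^p_t$-bounded multiplier $\tau\mapsto(i\tau+|\xi|^2)^{-1}(-\xi_i\xi_j\sigma_0)$ — here for fixed $\xi\neq0$ this is, after the substitution $\tau=|\xi|^2 s$, a fixed $L^p(\mathbb R_s)$-Fourier-multiplier $(is+1)^{-1}$ times a constant, hence bounded on $L^p_t$ with norm independent of $\xi$ — and then invoke Weis's theorem again, this time for $x$-multipliers with values in $\mathcal L(L^p_t)$, $L^p(\mathbb R)$ being UMD; $R$-boundedness of the relevant families reduces to the classical Mikhlin condition in $\xi$ with the parametrised bounds just described.

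Alternatively, and perhaps more self-contained, one can avoid $R$-boundedness entirely and instead note that for $p=q$ both statements are the single estimate $T:L^p(\mathbb R^{1+d})\to L^p(\mathbb R^{1+d})$, which follows from the parabolic Calderón–Zygmund theory since $m$ is a parabolic-homogeneous degree-$0$ smooth symbol; then real or complex interpolation with the trivial $L^2$ bound, combined with a Rubio de Francia extrapolation in the mixed-norm setting, recovers the full range of $(p,q)$. I expect the main obstacle to be the $R$-boundedness bookkeeping in the operator-valued Mikhlin step (equivalently, setting up the mixed-norm Calderón–Zygmund/extrapolation machinery cleanly), since one must check the Mikhlin/Hörmander conditions for the parametrised multiplier families with constants uniform in the frozen variable; once that is in place, the kernel size bounds needed are exactly those furnished by Lemma~\ref{size}, so nothing further is required from the heat kernel.
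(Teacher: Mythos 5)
Your main route is correct but genuinely different from the paper's. The paper proceeds in two steps: first it establishes boundedness of $T$ on the scalar space $L^r(\mathbb{R}\times\mathbb{R}^d)$ for all $1<r<\infty$ by treating $T$ as a Calder\'on--Zygmund operator for the \emph{parabolic} metric (the kernel bounds coming from Lemma~\ref{size}); this gives in particular the diagonal bounds $L^q_t(L^q_x)\to L^q_t(L^q_x)$ and $L^p_x(L^p_t)\to L^p_x(L^p_t)$, which then serve as the starting exponent for the Benedek--Calder\'on--Panzone vector-valued singular integral theorem, applied with the operator-valued kernels $T_t\in\mathcal{L}(L^q_x)$ (satisfying $\|T_t\|\lesssim 1/t$, $\|\partial_tT_t\|\lesssim 1/t^2$) and $T_x\in\mathcal{L}(L^p_t)$ (satisfying $\|T_x\|\lesssim |x|^{-d}$, $\|\partial_kT_x\|\lesssim |x|^{-d-1}$). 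You instead supply the missing starting point on the Fourier side, via Weis's operator-valued Mikhlin theorem and $R$-boundedness; this avoids the scalar parabolic CZ step entirely but imports heavier machinery. Both are standard and legitimate paths to this maximal-regularity estimate; the paper's is the more elementary (1962-vintage) one, yours generalizes more readily to situations where no convenient diagonal exponent is available.

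Two caveats on your write-up. First, for fixed $\tau$ the symbol $m(\tau,\cdot)$ is \emph{not} $0$-homogeneous in $\xi$ (only jointly parabolically homogeneous); what you actually need, and what is true, is that $m(\tau,\cdot)$ and $\tau\partial_\tau m(\tau,\cdot)$ satisfy Mikhlin's condition in $\xi$ with constants uniform in $\tau$. Moreover, the implication ``uniformly Mikhlin family $\Rightarrow$ $R$-bounded'' is not a property of general UMD spaces: it uses the lattice structure of $L^q$ (square-function characterization of $R$-boundedness plus the $\ell^2$-valued Mikhlin theorem, or domination by the maximal operator together with Fefferman--Stein). Since your target space is $L^q_x$ (resp.\ $L^p_t$), this is fine, but the justification should be phrased that way. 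Second, your ``alternative'' route does not work as stated: interpolating the diagonal bounds $L^r(L^r)$ against $L^2(L^2)$ only produces exponents on a line, not the full off-diagonal range of $(p,q)$, and the mixed-norm Rubio de Francia extrapolation you would need requires weighted $L^{r}$ estimates (for appropriate weight classes) that you have not established. Discard that paragraph or replace it by the Benedek--Calder\'on--Panzone argument, which is exactly the clean way to pass from the diagonal to the full mixed-norm range.
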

     \begin{proof} This is a classical result, obtained through the theory of vector valued singular integrals \cite{BCP}. We sketch the proof given in \cite{PGL1}. Let $W(x)=\frac 1{(4\pi)^{d/2}} e^{-\frac{\vert x\vert^2}4}$
 and let $$\Omega(t,x)=   \frac 1{t^{\frac{d+2}2}} (\sigma(D)\partial_i\partial_j W)(\frac x {\sqrt t})\text{ for }t>0, \quad =0  \text{ for }t<0.$$
 
 We then consider the operator
\begin{equation}\label{singint} T(g)(t,x)=\int_\mathbb{R}\int_{\mathbb{R}^d}   \Omega(t-s,x-y) g(s,y)\, ds\, dy.\end{equation} We have $$(\partial_t-\Delta)T( g)=\sigma(D)\partial_i\partial_j g$$ and, taking the Fourier transform,
 $$\widehat{T(g)}(\tau,\xi)=-\frac{\xi_i\xi_j}{i\tau+\vert \xi\vert^2}\sigma(\xi) \hat g(\tau,\xi).$$ Thus, $T$ is bounded on $L^2(\mathbb{R}\times\mathbb{R}^d)$.   $T$ can be seen as Calder\' on--Zygmund operator on the parabolic space $\mathbb{R}\times\mathbb{R}^d$ (for the parabolic distance): from Lemma \ref{size}, we see that $\vert\Omega(t,x)\vert\leq C \frac 1{(\sqrt{\vert t\vert}+\vert x\vert)^{d+2}}$,  $\vert\partial_i\Omega(t,x)\vert\leq C \frac 1{(\sqrt{\vert t\vert}+\vert x\vert)^{d+3}}$,  and  $\vert\partial_t\Omega(t,x)\vert=\vert\Delta\Omega(t,x)\vert\leq C \frac 1{(\sqrt{\vert t\vert}+\vert x\vert)^{d+4}}$.  Thus, we get that  $T$ is bounded on $L^p(\mathbb{R}\times\mathbb{R}^d)$ for $1<p<+\infty$. Let $T_t$ be the operator
 $$T_t(v)=\int_{\mathbb{R}^d}  \frac 1 {t} \Omega(t,x-y) v(y)\,  dy  $$ and $T_x$ be the operator
 $$T_x(w)=\int_{\mathbb{R}}   \frac 1 {t-s} \Omega(t-s,x)  w(s)\, ds$$ We have  $$\partial_tT_t(v)=\int_{\mathbb{R}^d}  \frac 1 {t} \Delta \Omega(t,x-y) v(y)\,  dy  $$ and 
 $$\partial_kT_x(w)=\int_{\mathbb{R}}   \frac 1 {t-s} \partial_k\Omega(t-s,x)  w(s)\, ds.$$  From  Lemma \ref{size}, we find $$ \|T_t\|_{L^q_x\mapsto L^q_x}\leq C\frac 1 t \text{ and } \|\partial_tT_t\|_{L^q_x\mapsto L^q_x}\leq C\frac 1 {t^2}$$ so that the continuity $L^q_t(L^q_x)\mapsto L^q_t(L^q_x)$ of $T$ can be extended to the continuity $L^p_t(L^q_x)\mapsto L^p_t(L^q_x)$. Similarly, we have  $$ \|T_x\|_{L^p_t\mapsto L^p_t}\leq C\frac 1 {\vert x\vert^d} \text{ and } \|\partial_kT_x\|_{L^p_t\mapsto L^p_t}\leq C\frac 1 {\vert x\vert^{d+1}}  $$ and the continuity $L^p_x(L^p_t)\mapsto L^p_x(L^p_t)$ of $T$ can be extended to the continuity $L^q_x(L^p_t)\mapsto L^q_x(L^p_t)$.
  \end{proof}

  Similar estimates hold for Krylov spaces:
   \begin{proposition}\label{regmaxkryl} Let $1<p,q<+\infty$ and $0<\beta<\frac 2 p+\frac d q$.  Let $\sigma(D)$ be a Fourier multiplier (in the space variable) such that $\sigma (\xi)=  {\sigma_0(\frac \xi{\vert\xi\vert})} $ where $\sigma_0$ is a smooth function on the sphere $\mathcal{S}^{d-1}$.
     \\ a)  If $h\in E_{p,q,\beta}$, then, for $1\leq i,j\leq d$, 
     $$\| \int_{-\infty}^t e^{(t-s)\Delta} \sigma(D)\partial_i\partial_j h\, ds\|_{E_{p,q,\beta}}\leq C \|h\|_{E_{p,q,\beta}}.$$
      a)  If $h\in  F_{p,q,\beta}$, then, for $1\leq i,j\leq d$, 
     $$\| \int_{-\infty}^t e^{(t-s)\Delta} \sigma(D)\partial_i\partial_j h\, ds\|_{F_{p,q,\beta}}\leq C \|h\|_{F_{p,q,\beta}}.$$
     \end{proposition}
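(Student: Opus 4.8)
The plan is to follow the localization argument already used in the proof of Proposition~\ref{prop4}, splitting the singular integral into a near part handled by the flat mixed-norm estimate (Proposition~\ref{regmax}) and a far part handled by size estimates on the kernel coming from Lemma~\ref{size}. Write $T(h)=\int_{-\infty}^t e^{(t-s)\Delta}\sigma(D)\partial_i\partial_j h\,ds=\iint \Omega(t-s,x-y)\,h(s,y)\,ds\,dy$ with $\Omega$ as in the proof of Proposition~\ref{regmax}, so that $\vert\Omega(t,x)\vert\le C\,\rho((t,x),(0,0))^{-(d+2)}$ where $\rho$ is the parabolic distance. Fix a parabolic cylinder $C_r(t_0,x_0)$ and decompose $h=h_1+h_2$ with $h_1=\mathds{1}_{C_{4r}(t_0,x_0)}h$ and $h_2=h-h_1$; then $T(h)=T(h_1)+T(h_2)$ on $C_r(t_0,x_0)$.

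For the near part, Proposition~\ref{regmax}(a) gives $\|T(h_1)\|_{L^p_tL^q_x}\le C\|h_1\|_{L^p_tL^q_x}\le C'\|h\|_{E_{p,q,\beta}}\,r^{\frac2p+\frac dq-\beta}$, which is exactly the contribution allowed by the definition of $E_{p,q,\beta}$. For the far part, causality forces $s<t$, and for $(s',z)\in C_r(t_0,x_0)$ one has $\rho((s',z),(s,y))\ge c\,\rho((t_0,x_0),(s,y))\ge c'\,2^j r$ on the dyadic parabolic annulus $A_j=B_{2^{j+1}r}(t_0,x_0)\setminus B_{2^j r}(t_0,x_0)$ for $j\ge 2$, so that
\begin{equation*}
\vert T(h_2)(s',z)\vert\le C\sum_{j\ge 2}\frac{1}{(2^j r)^{d+2}}\iint_{A_j}\vert h(s,y)\vert\,ds\,dy\le C'\|h\|_{E_{p,q,\beta}}\sum_{j\ge 2}(2^j r)^{-\beta}\le C''\|h\|_{E_{p,q,\beta}}\,r^{-\beta},
\end{equation*}
using $\mu(A_j)\le C(2^j r)^{d+2}$ together with the $E_{p,q,\beta}$ bound on the mass of $h$ over $B_{2^{j+1}r}$. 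Hence $\|\mathds{1}_{C_r(t_0,x_0)}T(h_2)\|_{L^p_tL^q_x}\le C\mu(C_r)^{\frac1p\cdot\frac{}{}}\cdots$; more simply, since $T(h_2)$ is bounded by $C\|h\|_{E_{p,q,\beta}}r^{-\beta}$ on $C_r(t_0,x_0)$, we get $\|\mathds{1}_{C_r(t_0,x_0)}T(h_2)\|_{L^p_tL^q_x}\le C\|h\|_{E_{p,q,\beta}}r^{-\beta}\cdot r^{\frac2p+\frac dq}=C\|h\|_{E_{p,q,\beta}}r^{\frac2p+\frac dq-\beta}$. Taking the supremum over cylinders gives the $E_{p,q,\beta}$ bound; the $F_{p,q,\beta}$ bound is identical, using Proposition~\ref{regmax}(b) for the near part and the same pointwise estimate for the far part (the far part estimate is a genuine $L^\infty$ bound on a cylinder, so it is insensitive to the order of integration).

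The one point that needs a little care, and which I expect to be the main technical obstacle, is making the far-part pointwise estimate rigorous when $h$ is merely locally integrable and $\sigma(D)$ is a nonlocal operator: one must justify that $T(h_2)$ is well defined pointwise on $C_r(t_0,x_0)$ and that the integral may be split dyadically, which follows once we check the absolute convergence $\iint \rho((t_0,x_0),(s,y))^{-(d+2)}\mathds{1}_{s<t_0+r^2}\vert h_2(s,y)\vert\,ds\,dy<\infty$ — and this is precisely the $j$-sum above, which converges because $\beta>0$. One should also note that the decomposition $T(h)=T(h_1)+T(h_2)$ on the cylinder is unambiguous only modulo constants in general, but here, since both $h$ and the kernel decay enough and $u$ is normalized by the condition $u=0$ at infinity, the operator $T$ is the one already identified in Proposition~\ref{regmax} and no such ambiguity arises. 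Everything else is the routine bookkeeping of homogeneous-type harmonic analysis already deployed in the proof of Proposition~\ref{prop4}.
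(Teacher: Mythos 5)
Your proof is correct and follows essentially the same route as the paper: the same decomposition $h=h_1+h_2$ relative to $C_{4r}$, the near part handled by the flat mixed-norm maximal regularity of Proposition~\ref{regmax}, and the far part bounded pointwise by $C\|h\|_{E_{p,q,\beta}}r^{-\beta}$ via the kernel decay $\vert\Omega\vert\leq C\rho^{-(d+2)}$ and a dyadic sum converging because $\beta>0$. No substantive differences to report.
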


     \begin{proof}  We consider only the case of $E_{p,q,\beta}$, as the proof for $F_{p,q,\beta}$ is similar. Let us estimate $T(h)$ on a cylinder $C_r(t,x)$ (where $T$ is the operator given by (\ref{singint})). We have $T(h)=T(h_1)+T(h_2)$, where $h_1=\mathds{1}_{C_{4r}(t,x)} h$ and $h_2=h-h_1$. By Proposition \ref{regmax}, we know that
     $$ \|{h_1}\|_{L^p_tL^q_x}\leq C \|{h_1}\|_{L^p_tL^q_x}\leq C' \|h\|_{E_{p,q,\beta}}r^{\frac 2 p+\frac d q-\beta}. $$ On the other hand, for $(s,z)\in C_r(t,x)\subset B_{2r}(t,x)$, since $B_{4r}(t,x)\subset C_{4r}(t,x)$,
     \begin{equation*}\begin{split}\vert{h_2}(s,z) \vert\leq & C\iint_{\rho((\sigma,y),(t,x))>4r} \frac 1{(\sqrt{ \vert\tau-s\vert}+\vert z-y\vert)^{d+2}} \vert h(\tau,y)\vert\, d\tau\, dy
   \\  \leq & C'\iint_{\rho((\sigma,y),(t,x))>4r} \frac 1{(\sqrt{ \vert\tau-t\vert}+\vert x-y\vert)^{d+2}} \vert h(\tau,y)\vert\, d\tau\, dy
  \\ \leq & C''\sum_{j=0}^{+\infty}   \frac 1{ (4^jr)^{d+2}} \iint_{C_{4^{j+1}r}} \vert h(\tau,y)\vert\, d\tau\, dy
     \\\leq &C''' \|h\|_{E_{p,q,\beta}} r^{-\beta}
     \end{split}\end{equation*}
     so that
  \begin{equation*} \|\mathds{1}_{C_r(t,x)}  {h_2}\|_{L^p_tL^q_x}\leq C  \|\|_{E_{p,q,\beta}}r^{\frac 2 p+\frac d q-\beta}. \tag*{\qedhere}\end{equation*}
     \end{proof}

           We may now easily prove Theorems \ref{theo3} and \ref{theo4}, Corollary \ref{sobolev} and Proposition \ref{theokryl}. Again, we shall    consider only the case of $E_{p,q,\beta}$, as the proofs for $F_{p,q,\beta}$ are similar.
           
           \subsection*{Proof of Theorem \ref{theo3}}
 \begin{proof}          The solution $u$ of equation $\partial_t u=\Delta u+ f$ is given by $u=\int_{-\infty}^t e^{t-s)\Delta} f(s,.)\, ds$.  By proposition (\ref{regmaxkryl}), we already know that
 $$\| D^2_xu\|_{ E_{p/3,q/3,\beta+2}}\leq C \|  f\|_{E_{p/3,q/3,\beta+2}} .$$ On the other hand, from Lemma \ref{size}, we see that $$\vert D_xu(t,x)\vert\leq C \iint \frac 1{ (\sqrt{\vert t-s\vert}+\vert x-y\vert)^{d+1}} \vert f(s,y)\vert\, ds\, dy= C \mathcal{I}_1(\vert f\vert)(t,x)$$ and $$\vert u(t,x)\vert\leq C \iint \frac 1{ (\sqrt{\vert t-s\vert}+\vert x-y\vert)^{d}} \vert f(s,y)\vert\, ds\, dy=C \mathcal{I}_2(\vert f\vert)(t,x).$$  We then apply Proposition \ref{prop4} to get
      $$ \| D_xu\|_{ E_{p/2,q/2,\beta+1}}  \leq C  \|  f\|_{E_{p/3,q/3,\beta+2}} $$
 and      \begin{equation*} \|  u\|_{E_{p,q,\beta}} \leq C \|  f\|_{E_{p/3,q/3,\beta+2}} .\tag*{\qedhere}\end{equation*}
  \end{proof}

             \subsection*{Proof of Theorem \ref{theo4}}
 \begin{proof}          The solution $u$ of equation $\partial_t u=\Delta u+ \sigma(D)\Div\mathbb{F}$ is given by $u=\int_{-\infty}^t e^{t-s)\Delta} \sigma(D)\Div\mathbb{F}(s,.)\, ds$.  By proposition (\ref{regmaxkryl}), we already know that
 $$\\| D_xu\|_{ E_{p/2,q/2,\beta+1}}  \leq C_0 \|  \mathbb{F}\|_{E_{p/2,q/2,\beta+1}}.$$ On the other hand, from Lemma \ref{size}, we see that $$\vert  u(t,x)\vert\leq C \iint \frac 1{ (\sqrt{\vert t-s\vert}+\vert x-y\vert)^{d+1}} \vert \mathbb{F}(s,y)\vert\, ds\, dy= C \mathcal{I}_1(\vert \mathbb{F}\vert)(t,x).$$  We then apply Proposition \ref{prop4} to get
        \begin{equation*} \|  u\|_{E_{p,q,\beta}} \leq C \|  \mathbb{F}\|_{E_{p/2,q/2,\beta+1}}  .\tag*{\qedhere}\end{equation*}
  \end{proof}

           \subsection*{Proof of Corollary \ref{sobolev}}
\begin{proof} We just write $\partial_tu-\Delta u=f$ with $f=\partial_tu-\Delta u$. By Theorem \ref{theo3}, we have   $$ \|  u\|_{E_{p,q,\beta}}+\| D_xu\|_{ E_{p/2,q/2,\beta+1}} +\| D^2_xu\|_{ E_{p/3,q/3,\beta+2}}\leq C_0  (\|\partial_tu\|_{E_{p/3,q/3,\beta+2}} +\|\Delta u\|_{E_{p/3,q/3,\beta+2}})$$ \end{proof}
     
               \subsection*{Proof of Proposition \ref{theokryl}}
\begin{proof} Let $X=\{u\in   E_{p,q,\beta}\ /\ D_xu\in { E_{p/2,q/2,\beta+1}},  D^2_xu\in { E_{p/3,q/3,\beta+2}}\}$, normed with
$$\|u\|_X= \|  u\|_{E_{p,q,\beta}}+\| D_xu\|_{ E_{p/2,q/2,\beta+1}} +\| D^2_xu\|_{ E_{p/3,q/3,\beta+2}}.$$ We are looking for a solution in $X$ of the equation
$$ u= T(f)+ T(\vec b\cdot\vec\nabla u)+T(cu)$$, where $T(f)=\int_{-\infty}^t e^{(t-s)\Delta} f(s,.)\, ds$. By Theorem \ref{theo3}, we have $$\|T(f)\|_X\leq C_1  \|  f\|_{E_{p/3,q/3,\beta+2}},$$
$$\|T(\vec b\cdot\vec\nabla u)\|_X\leq C_2  \|  \vec b\cdot\vec\nabla u\|_{E_{p/3,q/3,\beta+2}}\leq C_2 \|\vec b\|_{E_{p,q,1}} \|D_xu\|_{ E_{p/2,q/2,\beta+1}},$$
and $$\|T(cu)\|_X\leq C_3  \|  cu\|_{E_{p/3,q/3,\beta+2}}\leq C_3 \|c\|_{E_{p/2,q/2,2}} \|u\|_{p,q,\beta}.$$
For $C_2 \|\vec b\|_{E_{p,q,1}}+ C_3 \|c\|_{E_{p/2,q/2,2}} <\frac 1 2$, the operator $S(u)=u-T(\vec b\cdot\vec\nabla u)-T(cu)$ is an isomorphism of $X$, with $\|S^{-1}\|_{X\mapsto X}\leq 2$. The solution $u$ is then given by $u=S^{-1}(T(f))$, with $\|u\|_X\leq 2C_1  \|  f\|_{E_{p/3,q/3,\beta+2}}$.
\end{proof}

          \section{The Navier--Stokes equations.}
          We prove here Theorem \ref{theo1}. The proof of Theorem \ref{theo2} is similar. Again, we  consider only the case of $E_{p,q,\beta}$.

          \subsection*{Proof of Theorem \ref{theo1}}
\begin{proof}The Navier--Stokes equations we study are  
 \begin{equation}\label{NSE2}\left\{
 \begin{split} &\partial_t \vec u =\Delta\vec u-\vec \nabla p-\vec u\cdot \vec \nabla\vec u + \vec f+\Div\mathbb{F}
\\& \Div\vec u=0
\\& \vec u(0,.)=  0
\end{split}\right.\end{equation}
where $\vec f\in E_{p/3,q/3,3}$ with $\Div\vec f=0$  and $\mathbb{F}\in E_{p/2,q/2,2}$. We rewrite (\ref{NSE2}) as
$$ \partial_t\vec u-\Delta\vec u=\vec f+({\rm Id}-\frac1 \Delta \vec\nabla\Div)\Div(\mathbb{F}-\vec u\otimes\vec u).$$  (${\rm Id}-\frac1 \Delta \vec\nabla\Div=\mathbb{P}$ is the Leray projection operator on solenoidal vector fields.)
 
  Let $X=\{\vec u\in   E_{p,q,1}\ /\ D_x\vec u\in { E_{p/2,q/2, 2}} \}$, normed with
$$\|u\|_X= \|  \vec u\|_{E_{p,q,1}}+\| D_x\vec u\|_{ E_{p/2,q/2, 2}}.$$ We are looking for a solution in $X$ of the equation
$$\vec  u= \vec T(\vec f)+ \vec T(\mathbb{P}\Div\mathbb{F})-\vec T(\mathbb{P}\Div(\vec u\otimes\vec u)),$$ where $\vec T(\vec f)= \int_0^t e^{(t-s)\Delta} \vec f(s,.)\, ds$ ($=\int_{-\infty}^t e^{(t-s)\Delta} \vec f(s,.)\, ds$  if we extend $\vec f$ to $(-\infty,0)$ with $\vec f(t,x)=0$ for $t<0$). By Theorem \ref{theo3}, we have $$\|\vec T(\vec f)\|_X\leq C_1  \| \vec  f\|_{E_{p/3,q/3, 3}}.$$ By Theorem \ref{theo4}, we have
$$\|\vec T(\mathbb{P}\Div\mathbb{F})\|_X\leq C_2  \|  \mathbb{F}\|_{E_{p/2,q/2,2}} $$
and $$\|\vec T(\mathbb{P}\Div(\vec u\otimes\vec u))\|_X\leq C_3  \|  \vec u\otimes\vec u\|_{E_{p/2,q/2,2}}\leq C_3  \|\vec u\|_{p,q,1}^2.$$
For $C_1  \| \vec  f\|_{E_{p/3,q/3, 3}}+C_2  \|  \mathbb{F}\|_{E_{p/2,q/2,2}}<\frac 1{4C_3}$, we find a solution $\vec u\in X$ with $\|\vec u\|_X\leq 2(C_1  \| \vec  f\|_{E_{p/3,q/3, 3}}+C_2  \|  \mathbb{F}\|_{E_{p/2,q/2,2}})$.   \end{proof}

       \section*{Appendix: Parabolic Besov spaces}
   In this appendix, we describe some results on the heat equation on parabolic Besov spaces that may shed a new light on  many of the inequalities we obtained throughout the paper, which are to be viewed more as regularity assertions than as results on existence of solutions.  
   
   For $\delta>0$, let  us define the parabolic Besov space
    $ \mathcal{B}^{-\delta}_{[2],\infty,\infty}$ as the space of tempered distributions on $\mathbb{R}\times\mathbb{R}^d$ such that
    $$ \|u\|_{ \mathcal{B}^{-\delta}_{[2],\infty,\infty}}=\sup_{\theta>0}\theta^{\delta/2} \|e^{\theta^2(\partial_t^2-\Delta^2)}u(t,x) \|_{L^\infty(\mathbb{R}\times\mathbb{R}^d)}<+\infty.$$ (In particular, $E_{p,q,\beta}\subset  \mathcal{B}^{-\beta}_{[2],\infty,\infty}$ and $F_{p,q,\beta}\subset  \mathcal{B}^{-\beta}_{[2],\infty,\infty}$.)
    
    It is easy to check that $\mathcal{B}^{-\beta}_{[2],\infty,\infty}$ coincides with the (realization of the) homogeneous Besov space defined through the parabolic Littlewood-Paley decomposition  (for instance, see Triebel's book \cite{TRI}). Let $\phi(\tau,\xi)$ be a smooth function equal to $1$ when $\sqrt{\vert\tau\vert}+\vert\xi\vert<1$ and to $0$ when $\sqrt{\vert\tau\vert}+\vert\xi\vert>2$. Let $S_jf$ be defined as the inverse Fourier transform of $\phi(\frac\tau{4^j},\frac\xi{2^j}) \hat f(\tau,\xi)$. Then $\mathcal{B}^{-\beta}_{[2],\infty,\infty}$ is the Banach space of tempered distributions $f$ such that $\sup_{j\in\mathbb{Z}} 2^{j\delta} \|S_jf\|_\infty<+\infty$.

We have the following Gagliardo--Nirenberg inequality
\begin{proposition} \label{gagl}Let $\delta>0$, $1<p,q<+\infty$ and $\beta\in (0, \frac 2 p+\frac d q)$.\\ a)  If $u\in \mathcal{B}^{-\delta}_{[2],\infty,\infty}$ and $\partial_t u, \Delta u\in E_{p,q,\beta}$, then 
$$ \| D_xu\|_{E_{\frac{2+\delta}{1+\delta}p,\frac{2+\delta}{1+\delta}q,\frac{1+\delta}{2+\delta}\beta}} \leq C  \|u\|_{ \mathcal{B}^{-\delta}_{[2],\infty,\infty}}^{\frac{1}{2+\delta}} (\| \partial_t u\|_{E_{p,q,\beta}}+\| \Delta u\|_{E_{p,q,\beta}})^{\frac{1+\delta}{2+\delta}}.$$
b) If $u\in \mathcal{B}^{-\delta}_{[2],\infty,\infty}$ and $\partial_t u, \Delta u\in F_{p,q,\beta}$, then 
$$ \| D_xu\|_{F_{\frac{2+\delta}{1+\delta}p,\frac{2+\delta}{1+\delta}q,\frac{1+\delta}{2+\delta}\beta}} \leq C  \|u\|_{ \mathcal{B}^{-\delta}_{[2],\infty,\infty}}^{\frac{1}{2+\delta}} (\| \partial_t u\|_{F_{p,q,\beta}}+\| \Delta u\|_{F_{p,q,\beta}})^{\frac{1+\delta}{2+\delta}}.$$
\end{proposition}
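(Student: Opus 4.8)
The plan is to use the parabolic Littlewood--Paley decomposition to interpolate between the high-order information ($\partial_t u, \Delta u \in E_{p,q,\beta}$) and the low-order information ($u \in \mathcal{B}^{-\delta}_{[2],\infty,\infty}$), in the spirit of the classical Gagliardo--Nirenberg argument adapted to the parabolic metric. Write $u = \sum_{j \in \mathbb{Z}} \Delta_j u$ with $\Delta_j u = S_{j+1}u - S_j u$, so that $\Delta_j u$ has parabolic frequencies of size $\sim 2^j$ (meaning $\sqrt{|\tau|} + |\xi| \sim 2^j$, hence $|\xi| \lesssim 2^j$). Fix a threshold $N \in \mathbb{Z}$ and split $D_x u = \sum_{j < N} D_x \Delta_j u + \sum_{j \ge N} D_x \Delta_j u$. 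For the high-frequency block I would use that on frequencies $\gtrsim 2^j$ the Fourier multiplier $\xi \mapsto \xi_k / (|\tau| + |\xi|^2)$ composed with a Littlewood--Paley cutoff is, after parabolic scaling, a convolution operator whose kernel is $2^{-j}$ times a fixed integrable (Schwartz-type) bump; hence $\|D_x \Delta_j u\|_{E_{p,q,\beta}} \lesssim 2^{-j} \|\Delta_j(\partial_t - \Delta) u\|_{E_{p,q,\beta}} \lesssim 2^{-j} A$, where I abbreviate $A := \|\partial_t u\|_{E_{p,q,\beta}} + \|\Delta u\|_{E_{p,q,\beta}}$ (using that $E_{p,q,\beta}$ is stable under the $\Delta_j$'s, these being convolution with an $L^1$-normalized-in-the-parabolic-scaling kernel, which follows exactly as in Proposition \ref{prop4}). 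Summing the geometric series gives $\|\sum_{j \ge N} D_x \Delta_j u\|_{E_{p,q,\beta}} \lesssim 2^{-N} A$.

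For the low-frequency block I would use the embedding $u \in \mathcal{B}^{-\delta}_{[2],\infty,\infty}$, i.e. $\|\Delta_j u\|_\infty \lesssim 2^{j\delta} B$ with $B := \|u\|_{\mathcal{B}^{-\delta}_{[2],\infty,\infty}}$, together with a Bernstein-type inequality in the parabolic setting: since $D_x \Delta_j u$ has parabolic frequencies $\lesssim 2^j$, one has $\|D_x \Delta_j u\|_\infty \lesssim 2^j \|\Delta_j u\|_\infty \lesssim 2^{j(1+\delta)} B$. Then, to control the $E_{p',q',\beta'}$-norm of $\sum_{j<N} D_x \Delta_j u$ with $p' = \frac{2+\delta}{1+\delta}p$, $q' = \frac{2+\delta}{1+\delta}q$, $\beta' = \frac{1+\delta}{2+\delta}\beta$, I would estimate the local $E$-norm on a parabolic cylinder $C_\rho(t,x)$: the $L^{p'}_t L^{q'}_x$ norm of an $L^\infty$ function over $C_\rho$ is at most $\|\cdot\|_\infty \rho^{(d+2)(1+\delta)/((2+\delta))}$-type power of $\rho$, matching the homogeneity $\frac{2}{p'} + \frac{d}{q'} = \frac{1+\delta}{2+\delta}(\frac 2p + \frac dq)$. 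Carrying the bookkeeping through, the low-frequency sum contributes $\lesssim \sum_{j<N} 2^{j(1+\delta)} B \cdot (\text{scale factors}) \lesssim 2^{N(1+\delta)} B$ after summing the geometric series (the exponent $1+\delta > 0$ making it converge at the top end $j = N$).

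Combining the two blocks gives, for every $N$,
$$
\| D_x u \|_{E_{p',q',\beta'}} \lesssim 2^{N(1+\delta)} B + 2^{-N} A,
$$
and optimizing over $N \in \mathbb{Z}$ (choosing $N$ so that $2^{N(2+\delta)} \sim A/B$, i.e. $2^N \sim (A/B)^{1/(2+\delta)}$) yields the desired bound $\| D_x u \|_{E_{p',q',\beta'}} \lesssim B^{1/(2+\delta)} A^{(1+\delta)/(2+\delta)}$. Part b) for the $F$-spaces is handled identically, reversing the order of integration in time and space throughout. The main obstacle I anticipate is making the two "parabolic Bernstein / maximal regularity on $\Delta_j$-blocks" estimates clean and uniform in $j$ — in particular justifying that $\Delta_j$ and the Littlewood--Paley-localized solution operator $\Delta_j(\partial_t-\Delta)^{-1}D_x$ act on $E_{p,q,\beta}$ and $F_{p,q,\beta}$ with the stated $j$-dependence; this requires the kernel bounds of Lemma \ref{size} type applied to the relevant localized symbols and the cylinder-splitting trick already used in the proofs of Propositions \ref{prop4} and \ref{regmaxkryl}, together with care about the mixed (parabolic, anisotropic in $t$ vs.\ $x$) scaling. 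One also needs the low-frequency partial sum $\sum_{j<N}\Delta_j u$ to be a genuine function (not merely defined modulo polynomials) so that the pointwise $L^\infty$ bounds make sense; this is exactly the content of the "$u=0$ at infinity" framework and the realization remark for $\mathcal{B}^{-\delta}_{[2],\infty,\infty}$ noted just before the proposition.
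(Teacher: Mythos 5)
Your decomposition and exponent bookkeeping have the right shape --- the paper performs essentially the same low/high frequency splitting, only continuously in the heat-semigroup parameter $\theta$ (with $\theta\sim 2^{-2j}$) rather than in discrete Littlewood--Paley blocks --- but there is a genuine gap at the step ``Combining the two blocks gives, for every $N$, $\|D_xu\|_{E_{p',q',\beta'}}\lesssim 2^{N(1+\delta)}B+2^{-N}A$.'' Neither block has been estimated in the target norm $E_{p',q',\beta'}$: the high-frequency sum is controlled in $E_{p,q,\beta}$, which has a different parabolic homogeneity ($\beta\neq\beta'$) and does not embed into $E_{p',q',\beta'}$; and the low-frequency sum is controlled only in $L^\infty$, which is not contained in $E_{p',q',\beta'}$ either: on a cylinder of radius $\rho$ an $L^\infty$ bound $M$ only yields $\rho^{\beta'-\frac2{p'}-\frac d{q'}}\|\cdot\|_{L^{p'}_tL^{q'}_x}\leq CM\rho^{\beta'}$, which is unbounded as $\rho\to\infty$ since $\beta'>0$. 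So the displayed combined inequality is false for every fixed $N$, and no single global choice of $N$ can close the argument; a per-cylinder choice $N=N(\rho)$ does not work either, because you then have no $L^{p'}_tL^{q'}_x$ control of the high-frequency tail on the cylinder (H\"older goes the wrong way, from $p'$ down to $p$).

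The repair --- and this is exactly the paper's proof --- is to make both the splitting and the optimization pointwise, in Hedberg's style. Upgrade your high-frequency estimate from a norm bound to the pointwise bound $|D_x\Delta_ju(t,x)|\lesssim 2^{-j}\,\mathcal{M}_{(\partial_t-\Delta)u}(t,x)$: your kernel estimates show that $\Delta_j(\partial_t-\Delta)^{-1}D_x$ has kernel dominated by $2^{-j}$ times an approximate identity at parabolic scale $2^{-j}$, hence is dominated by the parabolic maximal function. Summing gives $\sum_{j\geq N}|D_x\Delta_ju|\lesssim 2^{-N}(\mathcal{M}_{\partial_tu}+\mathcal{M}_{\Delta u})$ pointwise; keeping your $L^\infty$ bound $2^{N(1+\delta)}B$ on the low frequencies and choosing $N=N(t,x)$ with $2^{N(2+\delta)}\sim(\mathcal{M}_{\partial_tu}(t,x)+\mathcal{M}_{\Delta u}(t,x))/B$ yields
$$|D_xu(t,x)|\leq C\,B^{\frac1{2+\delta}}\bigl(\mathcal{M}_{\partial_tu}(t,x)+\mathcal{M}_{\Delta u}(t,x)\bigr)^{\frac{1+\delta}{2+\delta}}.$$
One then concludes with the elementary identity $\||g|^{\frac{1+\delta}{2+\delta}}\|_{E_{\frac{2+\delta}{1+\delta}p,\frac{2+\delta}{1+\delta}q,\frac{1+\delta}{2+\delta}\beta}}=\|g\|_{E_{p,q,\beta}}^{\frac{1+\delta}{2+\delta}}$ (the exponents are designed precisely for this) and the boundedness of $\mathcal{M}$ on $E_{p,q,\beta}$ from Proposition \ref{prop4} a). The paper's terms $I,III$ (small $\theta$, bounded by $\sqrt R\,\mathcal{M}_{\partial_tu}$, $\sqrt R\,\mathcal{M}_{\Delta u}$) and $II,IV$ (large $\theta$, bounded by $R^{-\frac{1+\delta}2}\|u\|_{\mathcal{B}^{-\delta}_{[2],\infty,\infty}}$) are exactly your two blocks with $R\sim 2^{-2N}$, and $R$ is chosen pointwise.
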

    
    Our final resuts extend  Corollary \ref{sobolev} and Theorem \ref{theo3} to parabolic Besov spaces:
    
        \begin{theorem}\label{theo5}
  Let $\delta>0$.   Let $u=0$ at infinity.  Then $u\in {\mathcal{B}^{-\delta}_{[2],\infty,\infty}}$ if and only if $  \partial_tu\in \mathcal{B}^{-\delta-2}_{[2],\infty,\infty}$ and $  \Delta u\in \mathcal{B}^{-\delta-2}_{[2],\infty,\infty}$  Moreover, the   norms     $ \|  u\|_{\mathcal{B}^{-\delta}_{[2],\infty,\infty}}$ and  $ \|  \partial_tu\|_{\mathcal{B}^{-\delta-2}_{[2],\infty,\infty}}+ \|  \Delta u\|_{\mathcal{B}^{-\delta-2}_{[2],\infty,\infty}} $ are equivalent. 
   \end{theorem}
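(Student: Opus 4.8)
\textbf{Proof proposal for Theorem \ref{theo5}.}

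The plan is to exploit the Littlewood--Paley characterization of $\mathcal{B}^{-\delta}_{[2],\infty,\infty}$ recalled just above Proposition \ref{gagl}, namely that $u\in\mathcal{B}^{-\delta}_{[2],\infty,\infty}$ iff $\sup_{j\in\mathbb{Z}}2^{j\delta}\|S_jf\|_\infty<+\infty$ (more precisely, one should work with the dyadic blocks $\Delta_j u=S_{j+1}u-S_j u$, whose Fourier transform is supported in an annulus $\{2^{j-1}<\sqrt{|\tau|}+|\xi|<2^{j+1}\}$). The one direction is immediate: if $u=0$ at infinity then differentiation in $t$ lowers the parabolic scaling by two, so each dyadic block of $\partial_t u$ is (up to a bounded Fourier multiplier localized to the annulus) $4^j$ times the corresponding block of $u$, giving $\|\Delta_j(\partial_t u)\|_\infty\lesssim 4^j\|\Delta_j u\|_\infty$, hence $\sup_j 2^{j(\delta+2)}\|\Delta_j(\partial_t u)\|_\infty\lesssim \sup_j 2^{j\delta}\|\Delta_j u\|_\infty$; likewise for $\Delta u$ since $|\xi|^2\le(\sqrt{|\tau|}+|\xi|)^2\sim 4^j$ on the $j$-th annulus. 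This proves $\|\partial_t u\|_{\mathcal{B}^{-\delta-2}}+\|\Delta u\|_{\mathcal{B}^{-\delta-2}}\le C\|u\|_{\mathcal{B}^{-\delta}}$.

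For the converse I would use the elementary algebraic identity, valid on the annulus $\{2^{j-1}<\sqrt{|\tau|}+|\xi|<2^{j+1}\}$, that the multiplier
$$ \frac{1}{\,i\tau+|\xi|^2\,}\bigl(\text{localized to the $j$-th annulus}\bigr) $$
has size $\sim 4^{-j}$ and, together with its derivatives, satisfies the natural parabolic symbol bounds; equivalently, $i\tau+|\xi|^2=\widehat{(\partial_t-\Delta)}$ is elliptic on each annulus in the parabolic sense, with $|i\tau+|\xi|^2|\gtrsim(\sqrt{|\tau|}+|\xi|)^2$ away from $(0,0)$ — indeed $|i\tau+|\xi|^2|^2=\tau^2+|\xi|^4\gtrsim(\sqrt{|\tau|}+|\xi|)^4$. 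Writing $f=(\partial_t-\Delta)u$, so $f\in\mathcal{B}^{-\delta-2}_{[2],\infty,\infty}$ by hypothesis, one has at the level of Fourier transforms $\widehat{\Delta_j u}=m_j(\tau,\xi)\widehat{\Delta_j f}$ where $m_j(\tau,\xi)=\chi_j(\tau,\xi)/(i\tau+|\xi|^2)$ and $\chi_j$ is a smooth cutoff to a slightly enlarged $j$-th annulus. Rescaling parabolically ($\tau\mapsto 4^j\tau$, $\xi\mapsto 2^j\xi$) turns $m_j$ into $4^{-j}$ times a fixed Schwartz-class multiplier independent of $j$, whose convolution kernel is therefore $4^{-j}$ times a fixed $L^1$ function (after the corresponding rescaling of the kernel). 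Hence $\|\Delta_j u\|_\infty\lesssim 4^{-j}\|\Delta_j f\|_\infty$, which gives $\sup_j 2^{j\delta}\|\Delta_j u\|_\infty\lesssim\sup_j 2^{j(\delta+2)}\|\Delta_j f\|_\infty\lesssim\|\partial_t u\|_{\mathcal{B}^{-\delta-2}}+\|\Delta u\|_{\mathcal{B}^{-\delta-2}}$; this is exactly the Besov bound modulo convergence of the Littlewood--Paley sum, and the condition $u=0$ at infinity (together with Lemma 1, applied to $u$ minus its reconstructed series) rules out the polynomial ambiguity so that $u$ genuinely equals the sum of its dyadic blocks.

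The main obstacle I expect is not the multiplier estimate — that is routine parabolic scaling once one notes $\tau^2+|\xi|^4\gtrsim(\sqrt{|\tau|}+|\xi|)^4$ — but rather handling the behavior near the frequency origin and the attendant \emph{realization} issue: $\mathcal{B}^{-\delta}_{[2],\infty,\infty}$ is a homogeneous Besov space, so the decomposition $u=\sum_{j\in\mathbb{Z}}\Delta_j u$ converges only modulo polynomials, and one must use the hypothesis ``$u=0$ at infinity'' to pin down the correct representative. I would dispatch this exactly as in Lemma 1: if $u$ and $\sum_j\Delta_j v$ (with $v$ built from $f=(\partial_t-\Delta)u$) have the same image under $\partial_t-\Delta$, their difference is a heat-equation solution that is $0$ at infinity, hence $\equiv 0$. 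The low-frequency tail $\sum_{j\le 0}\Delta_j u$ is controlled because $2^{j\delta}\to 0$ as $j\to-\infty$ with $\delta>0$, so no divergence occurs there; this is where $\delta>0$ is used. Finally, the equivalence of norms is just the combination of the two inequalities proved above.
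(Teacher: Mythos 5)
Your proof is correct in outline but takes a genuinely different route from the paper's. You work with the discrete parabolic Littlewood--Paley decomposition and the ellipticity of the symbol $i\tau+\vert\xi\vert^2$ on each dyadic annulus ($\tau^2+\vert\xi\vert^4\gtrsim(\sqrt{\vert\tau\vert}+\vert\xi\vert)^4$), whereas the paper never leaves the continuous functional calculus of the semigroup $e^{\theta^2(\partial_t^2-\Delta^2)}$ that defines the norm: for the easy direction it factors $e^{\theta^2(\partial_t^2-\Delta^2)}\partial_t u=\theta^{-1}\bigl(\theta\partial_t e^{\frac34\theta^2(\partial_t^2-\Delta^2)}\bigr)\bigl(e^{\frac14\theta^2(\partial_t^2-\Delta^2)}u\bigr)$, and for the converse it uses the reproducing formula $u=\int_0^\infty(-\partial_\eta)\bigl(e^{\eta^2(\partial_t^2-\Delta^2)}u\bigr)\,d\eta=2\int_0^\infty\eta(\Delta^2-\partial_t^2)e^{\eta^2(\partial_t^2-\Delta^2)}u\,d\eta$ (valid precisely because $u=0$ at infinity), splits $\Delta^2-\partial_t^2=\Delta\cdot\Delta-\partial_t\cdot\partial_t$, and integrates; the hypothesis $\delta>0$ enters there as convergence of $\int_0^\infty(\theta^2+\eta^2/2)^{-(2+\delta)/4}\,d\eta$, the continuous analogue of your summation over blocks. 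Your route buys two things: it isolates the stronger fact that only $(\partial_t-\Delta)u\in\mathcal{B}^{-\delta-2}_{[2],\infty,\infty}$ is needed for the converse (which is exactly Theorem \ref{theo6}, proved separately in the paper by the same semigroup calculus), and it makes the parabolic ellipticity explicit. The paper's route buys economy: it needs neither the equivalence between the semigroup and Littlewood--Paley characterizations (which the paper only asserts, not proves) nor any discussion of realizations and polynomial ambiguity, since the hypothesis ``$u=0$ at infinity'' is consumed exactly once, to justify the reproducing formula. Two points you should tighten: (i) with blocks $\Delta_j u$ localized at parabolic frequency $\sim 2^j$, the norm must read $\sup_j 2^{-j\delta}\|\Delta_j u\|_\infty$ --- your own estimates $\|\Delta_j(\partial_t u)\|_\infty\lesssim 4^{j}\|\Delta_j u\|_\infty$ and $\|\Delta_j u\|_\infty\lesssim 4^{-j}\|\Delta_j f\|_\infty$ only telescope with that sign, and the sign as you (and, apparently, the paper's displayed characterization) wrote it is inconsistent with the semigroup definition; (ii) before applying Lemma 1 to the difference of $u$ and its reconstructed Littlewood--Paley sum, you must verify that the reconstructed sum is itself $0$ at infinity (this follows from the block bounds and the asserted equivalence of characterizations, but it is a step, not a remark).
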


     \begin{theorem}\label{theo6}
  Let  $\delta>2$. If $  f\in  \mathcal{B}^{-\delta}_{[2],\infty,\infty}$, then the heat equation  
\begin{equation}\label{heat6}\partial_t u=\Delta u+ f\end{equation} has a unique solution such that $u=0$ at infinity. Moreover, $u\in \mathcal{B}^{-\delta+2}_{[2],\infty,\infty} $ and  $$ \|  u\|_{\mathcal{B}^{-\delta+2}_{[2],\infty,\infty}} \leq C  \|  f\|_{\mathcal{B}^{-\delta}_{[2],\infty,\infty}} .$$
   \end{theorem}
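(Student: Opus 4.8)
The plan is to exhibit the solution explicitly as $u=(\partial_t-\Delta)^{-1}f$, i.e.\ via the Duhamel formula $u=\int_{-\infty}^{t}e^{(t-s)\Delta}f(s,\cdot)\,ds$, and to read its parabolic smoothness off the scaling of the underlying kernels. Since $f$ is only a tempered distribution of negative parabolic regularity, one first makes sense of $(\partial_t-\Delta)^{-1}f$ through the parabolic Littlewood--Paley decomposition $f=\sum_{j\in\mathbb Z}\Delta_jf$ used to define $\mathcal B^{-\delta}_{[2],\infty,\infty}$: setting $u:=\sum_j K_j(D)\Delta_jf$, where $K_j(D)$ has symbol $\psi(4^{-j}\tau,2^{-j}\xi)(i\tau+|\xi|^2)^{-1}$ and $\psi$ is the parabolic annulus cut--off, one proves the single--block bound $\|K_j(D)g\|_\infty\le C\,4^{-j}\|\tilde\Delta_jg\|_\infty$: indeed $(i\tau+|\xi|^2)^{-1}$ is smooth away from the origin and homogeneous of degree $-2$ for the parabolic dilation $(\tau,\xi)\mapsto(\lambda^2\tau,\lambda\xi)$, so that $K_j$ is, up to the scalar $4^{-j}$, the $\lambda=2^{-j}$ parabolic rescaling of the fixed Schwartz function $\psi\,(i\tau+|\xi|^2)^{-1}$, and Young's inequality closes the estimate with $C=\|\mathcal F^{-1}(\psi\,(i\tau+|\xi|^2)^{-1})\|_{L^1}$. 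Combined with $\|\tilde\Delta_jf\|_\infty\lesssim 2^{j\delta}\|f\|_{\mathcal B^{-\delta}_{[2],\infty,\infty}}$ this gives $\|\Delta_ju\|_\infty\lesssim 2^{j(\delta-2)}\|f\|_{\mathcal B^{-\delta}_{[2],\infty,\infty}}$; since $\delta>2$, $\sum_{j<0}\Delta_ju$ converges absolutely in $L^\infty$ and $\sum_{j\ge0}\Delta_ju$ converges in $\mathcal S'$ (each term is dyadically frequency--localized, so testing against a Schwartz function contributes a rapidly decaying factor), so $u$ is a well--defined tempered distribution; and since $(i\tau+|\xi|^2)K_j(\tau,\xi)=\psi(4^{-j}\tau,2^{-j}\xi)$ one gets $(\partial_t-\Delta)u=\sum_j\Delta_jf=f$.

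For the quantitative bound I would return to the definition of $\mathcal B^{-\delta+2}_{[2],\infty,\infty}$ through the operators $e^{\theta^2(\partial_t^2-\Delta^2)}$. Commuting Fourier multipliers, $e^{\theta^2(\partial_t^2-\Delta^2)}u=H_\theta*f$ with $H_\theta=\mathcal F^{-1}\big(e^{-\theta^2\tau^2-\theta^2|\xi|^4}(i\tau+|\xi|^2)^{-1}\big)$, and decomposing dyadically, $\|e^{\theta^2(\partial_t^2-\Delta^2)}u\|_\infty\le\sum_j\|\mathcal F^{-1}(\psi(4^{-j}\tau,2^{-j}\xi)\widehat{H_\theta})\|_{L^1}\,\|\tilde\Delta_jf\|_\infty$. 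On the $j$-th block the $L^1$ factor gains $4^{-j}$ as long as $2^j\lesssim\theta^{-1/2}$ (again by the degree $-2$ parabolic homogeneity of $(i\tau+|\xi|^2)^{-1}$, the Gaussian being $\approx1$ there) and decays faster than any power for $2^j\gtrsim\theta^{-1/2}$ (the Gaussian factor), so
$$\|e^{\theta^2(\partial_t^2-\Delta^2)}u\|_\infty\lesssim\|f\|_{\mathcal B^{-\delta}_{[2],\infty,\infty}}\sum_{j}2^{j(\delta-2)}\,\chi(2^j\sqrt\theta)\lesssim\theta^{-(\delta-2)/2}\|f\|_{\mathcal B^{-\delta}_{[2],\infty,\infty}},$$
where $\chi$ is a rapidly decreasing cut--off and the geometric sum, dominated by its largest term at $2^j\sim\theta^{-1/2}$, converges \emph{precisely because $\delta>2$} — this is the low--frequency obstruction, reflecting that $(i\tau+|\xi|^2)^{-1}$ is the symbol of a parabolic Riesz potential of order $2$. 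Multiplying by $\theta^{(\delta-2)/2}$ and taking the supremum over $\theta$ gives $\|u\|_{\mathcal B^{-\delta+2}_{[2],\infty,\infty}}\le C\|f\|_{\mathcal B^{-\delta}_{[2],\infty,\infty}}$, and the same estimate, now with $\theta\to+\infty$, gives $\|e^{\theta^2(\partial_t^2-\Delta^2)}u\|_\infty\to0$, so $u=0$ at infinity.

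Uniqueness is immediate: if $u_1,u_2$ are two solutions vanishing at infinity, then $w=u_1-u_2$ satisfies $\partial_tw-\Delta w=0$ and, by linearity of $e^{\theta^2(\partial_t^2-\Delta^2)}$, also $w=0$ at infinity, hence $w=0$ by the first Lemma of Section~2 (the uniqueness statement for the homogeneous heat equation under the vanishing--at--infinity condition). Alternatively, one may note that the multipliers $|\xi|^2(i\tau+|\xi|^2)^{-1}$ and $i\tau(i\tau+|\xi|^2)^{-1}$ are homogeneous of degree $0$ and smooth off the origin, hence (by the same rescaling argument) bounded on $\mathcal B^{-\delta}_{[2],\infty,\infty}$, so that $\Delta u,\partial_tu\in\mathcal B^{-\delta}_{[2],\infty,\infty}$ and Theorem~\ref{theo5} applied with exponent $\delta-2$ returns $u\in\mathcal B^{-\delta+2}_{[2],\infty,\infty}$; the route above is however more self--contained.

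The step I expect to be the main obstacle is the bookkeeping of scaling exponents: one must check that the two parabolic derivatives gained by $(\partial_t-\Delta)^{-1}$ match the prescribed index shift in \emph{each} norm involved — $\|K_j(D)\|$ at the Littlewood--Paley level and the $L^1$-norms of the frequency pieces of $H_\theta$ at the semigroup level — and that the threshold $\delta>2$ is exactly what makes the low--frequency tails $\sum_{j<0}2^{j(\delta-2)}$ summable and the realization $u=\sum_jK_j(D)\Delta_jf$ of the negative--index homogeneous parabolic Besov space unambiguous. The remaining ingredients — the single--block multiplier estimate, the parabolic Bernstein and Young inequalities, and the elementary fact that a parabolic rescaling acts by an explicit power on each norm — are routine on the parabolic space of homogeneous type set up in Section~4.
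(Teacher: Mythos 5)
Your proposal is correct, but it takes a genuinely different route from the paper. You construct $u$ through the parabolic Littlewood--Paley decomposition, $u=\sum_j K_j(D)\Delta_j f$ with $K_j$ carrying the symbol $\psi(4^{-j}\tau,2^{-j}\xi)(i\tau+|\xi|^2)^{-1}$, and you obtain both the $\mathcal S'$-convergence and the thermic estimate by dyadic summation, with $\delta>2$ appearing as the low-frequency summability threshold $\sum_{2^j\le\theta^{-1/2}}2^{j(\delta-2)}<\infty$. The paper instead stays entirely within the semigroup characterization: it uses the continuous reproducing formula $f=\int_0^\infty(-\partial_\eta)(e^{\eta^2(\partial_t^2-\Delta^2)}f)\,d\eta$ together with the algebraic factorization $\partial_t^2-\Delta^2=(\partial_t-\Delta)(\partial_t+\Delta)$ to pull $(\partial_t-\Delta)$ out of the integral, which yields the one-line explicit solution $u=-2\int_0^{+\infty}\eta(\Delta+\partial_t)e^{\eta^2(\partial_t^2-\Delta^2)}f\,d\eta$; the estimate then reduces to the convergence of $\int_0^\infty(1+\eta^2/2)^{-(2+\delta)/4}\,d\eta$, which is where $\delta>2$ (after the index shift) enters. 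The trade-off is clear: the paper's argument is shorter and needs nothing beyond the thermic definition of $\mathcal B^{-\delta}_{[2],\infty,\infty}$, whereas yours leans on the equivalence with the Littlewood--Paley characterization, which the paper only asserts ("it is easy to check") — strictly speaking you only need the one-sided bound $\|\Delta_jf\|_\infty\lesssim 2^{j\delta}\|f\|_{\mathcal B^{-\delta}_{[2],\infty,\infty}}$, obtained by inverting the Gaussian on each annulus, and you should say so explicitly. In exchange, your version makes the role of the hypothesis $\delta>2$ and the modulo-polynomials/realization issue for the homogeneous space completely transparent. Both treatments settle uniqueness the same way, via the Lemma on the homogeneous heat equation under the vanishing-at-infinity condition.
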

   
   Those results are not very new, as they can be recovered from the voluminous literature on anisotropic Besov spaces We present them for sake of completeness.
   
   \subsection*{Proof of Proposition \ref{gagl}.}
   \begin{proof} As $u=0$ at infinity, we may write
   $$ u=\int_0^{\infty} (-\partial_\theta) (e^{\theta^2(\partial_t^2-\Delta^2)}u)\, d\theta$$ and
   $$ D_xu=\int_0^{\infty} (-\partial_\theta) D_x (e^{\theta^2(\partial_t^2-\Delta^2)}u)\, d\theta.$$
  For $R>0$, we rewrite this as
   \begin{equation*}\begin{split}D_xu=  &-\int_0^{R}  (2\theta \partial_t  e^{\theta^2 \partial_t^2}\sqrt\theta D_x e^{- \theta^2 \Delta^2}) \partial_t u\, \frac{d\theta}{\sqrt\theta}
   \\ -&\int_R^{\infty}  (2\theta^2 \partial_t^2  e^{\frac 34 \theta^2 \partial_t^2}\sqrt\theta D_x e^{-\frac34\theta^2 \Delta^2}) (e^{\frac{\theta^2}4(\partial_t^2-\Delta^2) }u)\, \frac{d\theta}{ \theta^{3/2}}
   \\+&\int_0^{R}  ( 2 e^{\theta^2 \partial_t^2} \theta^{3/2} D_x\Delta e^{- \theta^2  \Delta^2} )  \Delta u\, \frac{d\theta}{\sqrt\theta}
   \\+&\int_R^{\infty}  ( 2 e^{\frac 34 \theta^2 \partial_t^2} \theta^{5/2} D_x\Delta^2  e^{-\frac34\theta^2\Delta^2} ) (e^{\frac{\theta^2}4(\partial_t^2-\Delta^2)}u)\, \frac{d\theta}{ \theta^{3/2}} \\=& I+II+III+IV.\end{split}\end{equation*} 
   
   Recall that $\mathcal{M}_f$ designs the parabolic Hardy--Littlewood maximal function. If $\psi\in \mathcal{S}(\mathbb{R}\times\mathbb{R}^d)$, we have, for every $\theta>0$,
   $$ \iint \frac 1{\theta^{\frac{d+2}2}} \vert \psi(\frac s\theta, \frac y{\sqrt\theta}) \vert \vert f(t-s,x-y)\, ds\, dy\leq C_\psi \mathcal{M}_f(t,x)$$ and
   $$ \iint \frac 1{\theta^{\frac{d+2}2}} \vert \psi(\frac s\theta, \frac y{\sqrt\theta}) \vert \vert f(t-s,x-y)\, ds\, dy\leq \|\psi\|_1 \|f\|_\infty.$$
   Thus, we have
   $$ \vert I\vert\leq C \sqrt R    \mathcal{M}_{\partial_t u}\text{ and } \vert III\vert\leq C \sqrt R    \mathcal{M}_{\Delta u}$$ and
   $$ \vert II\vert+\vert IV\vert \leq C R^{-\frac {1+\delta}2} \|u\|_{\mathcal{B}^{-\delta}_{[2],\infty,\infty}}.$$
   Taking $\sqrt R=\left( \frac {\|u\|_{\mathcal{B}^{-\delta}_{[2],\infty,\infty}}}{\mathcal{M}_{\partial_t u}(t,x)+ \mathcal{M}_{\Delta u}(t,x)}\right)^{\frac 1{2+\delta}}$, we get 
   $$ \vert D_xu(t,x)\vert \leq C \|u\|_{\mathcal{B}^{-\delta}_{[2],\infty,\infty}}^{\frac 1{2+\delta}} {(\mathcal{M}_{\partial_t u}(t,x)+ \mathcal{M}_{\Delta u}(t,x))}^{\frac {1+\delta}{2+\delta}}$$ and we conclude by Proposition \ref{prop4} a).
   \end{proof}

   \subsection*{Proof of Theorem \ref{theo5}.}
\begin{proof} Writing
 $$  e^{\theta^2(\partial_t^2-\Delta^2)}(\partial_t u)= \frac 1 \theta (\theta\partial_te^{\frac 3 4\theta^2(\partial_t^2-\Delta^2)}) (e^{\frac 1 4\theta^2(\partial_t^2-\Delta^2)} u)$$ and
 $$  e^{\theta^2(\partial_t^2-\Delta^2)}(\Delta  u)= \frac 1 \theta (\theta\Delta e^{\frac 3 4\theta^2(\partial_t^2-\Delta^2)}) (e^{\frac 1 4\theta^2(\partial_t^2-\Delta^2)} u),$$
 we find that
  $$ \|  \partial_tu\|_{\mathcal{B}^{-\delta-2}_{[2],\infty,\infty}}+ \|  \Delta u\|_{\mathcal{B}^{-\delta-2}_{[2],\infty,\infty}} \leq C \|  u\|_{\mathcal{B}^{-\delta}_{[2],\infty,\infty}}.$$ 
  As $u=0$ at infinity, we   write
   $$ u=\int_0^{\infty} (-\partial_\eta) (e^{\eta^2(\partial_t^2-\Delta^2)}u)\, d\eta=2\int_0^{+\infty} \eta (\Delta^2-\partial_t^2)e^{\eta^2(\partial_t^2-\Delta^2)}u\, d\eta. $$
 We get
  \begin{equation*}\begin{split} e^{\theta^2(\partial_t^2-\Delta^2)}u=& 2\int_0^{+\infty} (\eta \Delta e^{\frac{\eta^2}2(\partial_t^2-\Delta^2)}) (e^{(\theta^2+\frac{\eta^2}2)(\partial_t^2-\Delta^2)}\Delta u)\, d\eta\\&-2\int_0^{+\infty}  (\eta \partial_t e^{\frac{\eta^2}2(\partial_t^2-\Delta^2)}) (e^{(\theta^2+\frac{\eta^2}2)(\partial_t^2-\Delta^2)}\partial_t u)\, d\eta,
 \end{split}\end{equation*} 
  so that
    \begin{equation*}\begin{split} \|e^{\theta^2(\partial_t^2-\Delta^2)}u\|_\infty\leq & C\int_0^{+\infty} \|e^{(\theta^2+\frac{\eta^2}2)(\partial_t^2-\Delta^2)}\Delta u\|_\infty +\|e^{(\theta^2+\frac{\eta^2}2)(\partial_t^2-\Delta^2)}\partial_t u\|_\infty\, d\eta
    \\ \leq& C (\|  \partial_tu\|_{\mathcal{B}^{-\delta-2}_{[2],\infty,\infty}}+ \|  \Delta u\|_{\mathcal{B}^{-\delta-2}_{[2],\infty,\infty}} ) \int_0^{+\infty} (\theta^2+\frac{\eta^2}2)^{-\frac{2+\delta} 4}\, d\eta
    \\=& C \theta^{-\delta/2} (\|  \partial_tu\|_{\mathcal{B}^{-\delta-2}_{[2],\infty,\infty}}+ \|  \Delta u\|_{\mathcal{B}^{-\delta-2}_{[2],\infty,\infty}} ) \int_0^{+\infty} (1+\frac{\eta^2}2)^{-\frac{2+\delta} 4}\, d\eta
 \end{split}\end{equation*} 
   and finally
   \begin{equation*}\|  u\|_{\mathcal{B}^{-\delta}_{[2],\infty,\infty}}\leq C 
 ( \|  \partial_tu\|_{\mathcal{B}^{-\delta-2}_{[2],\infty,\infty}}+ \|  \Delta u\|_{\mathcal{B}^{-\delta-2}_{[2],\infty,\infty}}). \tag*{\qedhere}\end{equation*}   \end{proof}

   \subsection*{Proof of Theorem \ref{theo6}.}
\begin{proof} We write 
   $$ f=\int_0^{\infty} (-\partial_\eta) (e^{\eta^2(\partial_t^2-\Delta^2)}f)\, d\eta=-2 (\partial_t-\Delta)\int_0^{+\infty} \eta (\Delta+\partial_t)e^{\eta^2(\partial_t^2-\Delta^2)}f\, d\eta. $$
   Let $$u= -2  \int_0^{+\infty} \eta (\Delta+\partial_t)e^{\eta^2(\partial_t^2-\Delta^2)}f\, d\eta. $$
We have
  \begin{equation*}\begin{split} \|e^{\theta^2(\partial_t^2-\Delta^2)}u\|_\infty\leq & 2\int_0^{+\infty} \|(\eta \Delta e^{\frac{\eta^2}2(\partial_t^2-\Delta^2)}) (e^{(\theta^2+\frac{\eta^2}2)(\partial_t^2-\Delta^2)} f)\|_\infty\, d\eta\\&+2\int_0^{+\infty}  \|(\eta \partial_t e^{\frac{\eta^2}2(\partial_t^2-\Delta^2)}) (e^{(\theta^2+\frac{\eta^2}2)(\partial_t^2-\Delta^2)}f)\|_\infty\, d\eta,
\\ \leq & C\int_0^{+\infty} \|e^{(\theta^2+\frac{\eta^2}2)(\partial_t^2-\Delta^2)} f\|_\infty\, d\eta
    \\ \leq& C \|  f\|_{\mathcal{B}^{-\delta-2}_{[2],\infty,\infty}} \int_0^{+\infty} (\theta^2+\frac{\eta^2}2)^{-\frac{2+\delta} 4}\, d\eta
    \\=& C \theta^{-\delta/2} \|  f\|_{\mathcal{B}^{-\delta-2}_{[2],\infty,\infty}} \int_0^{+\infty} (1+\frac{\eta^2}2)^{-\frac{2+\delta} 4}\, d\eta
 \end{split}\end{equation*} 
 and finally
   \begin{equation*}\|  u\|_{\mathcal{B}^{-\delta}_{[2],\infty,\infty}}\leq C 
 \|  f\|_{\mathcal{B}^{-\delta-2}_{[2],\infty,\infty}}. \tag*{\qedhere}\end{equation*}   \end{proof}
 

\end{document}